\newtheorem{theorem}{Theorem}[section]
\newtheorem{proposition}[theorem]{Proposition}
\newtheorem{lemma}[theorem]{Lemma}
\theoremstyle{definition}    
\newtheorem{definition}[theorem]{Definition}
\theoremstyle{remark}
\newtheorem{remark}[theorem]{Remark}
\newtheorem{example}[theorem]{Example}
\newcommand\A{\mathcal{A}}
\newcommand{\W}{\mathcal{W}}
\renewcommand{\L}{\mathcal{L}}
\renewcommand{\O}{\mathcal{O}}
\newcommand{\J}{\mathcal{J}}
\newcommand{\ca}{\mathcal}
\newcommand{\E}{\ca{E}}
\newcommand{\N}{\mathbb{N}}
\newcommand{\R}{\mathbb{R}}
\newcommand\pt{\on{pt}}
\newcommand\lie[1]{\mathfrak{#1}}
\newcommand{\h}{\lie{h}}
\newcommand{\m}{\lie{m}}
\newcommand{\on}{\operatorname}
\renewcommand{\ker}{ \on{ker}}
\newcommand{\Mult}{  \on{Mult}}
\newcommand{\sz}{\mathsf{s}}
\newcommand{\tz}{\mathsf{t}}
\newcommand\qu{/\kern-.7ex/} 
\newcommand{\lra}{\longrightarrow}
\newcommand{\hra}{\hookrightarrow}
\newcommand{\ra}{\rightarrow}
\renewcommand{\d}{{\mbox{d}}}
\newcommand{\ol}{\overline}
\newcommand{\f}{\frac}
\newcommand{\p}{\partial}
\newcommand\hh{{\f{1}{2}}}
\newcommand{\ti}{\tilde}
\newcommand{\eeq}{\end{eqnarray*}}
\newcommand{\beq}{\begin{eqnarray*}}
\newcommand{\pr}{\on{pr}}
\newcommand{\lin}{{\on{lin}}}
\newcommand{\sA}{\mathsf{A}}
\newcommand{\mf}{\mathfrak}
\newcommand{\rra}{\rightrightarrows}
\renewcommand{\subset}{\subseteq}
\renewcommand{\supset}{\supseteq}
\newcommand{\I}{\ca{I}}
\begin{document}

\title[Euler-like vector fields]{Euler-like vector fields, normal forms,\\ and  isotropic embeddings}

\author{Eckhard Meinrenken}

\begin{abstract}
As shown in \cite{bur:spl}, germs of tubular neighborhood embeddings for submanifolds $N\subset M$ are in one-one correspondence with germs of Euler-like vector fields near $N$. In many contexts, this reduces the proof of `normal forms results' for geometric  structures to the construction of an Euler-like vector field compatible with the given structure. We illustrate this principle in a variety of examples, including the Morse-Bott lemma, Weinstein's Lagrangian embedding theorem, and Zung's linearization theorem for proper Lie groupoids. In the second part of this article, we extend the theory to a weighted context, with an application to isotropic embeddings. 
\end{abstract}

\maketitle
\begin{quote}
	{\it \small To the memory of Hans Duistermaat.}
\end{quote}
\vskip1cm
\bigskip
\section{Introduction}
	 A vector field on a manifold $M$ is called Euler-like with respect to a submanifold $N$ if it vanishes along the submanifold, and its linear approximation is the Euler vector field on the normal bundle. It was observed in \cite{bur:spl} that Euler-like vector fields determine tubular neighborhood embedding of the normal bundle in which they become \emph{the} Euler vector field. This provides a 1-1 correspondence between germs of Euler-like vector fields and germs of tubular neighborhood embeddings. 

Many normal form results for geometric structures along submanifolds $N\subset M$ amount to a construction of tubular neighborhood embeddings $\nu(M,N)\supset O\stackrel{\varphi}{\lra} M$ in which the given structure takes on a simple form -- for example, it becomes linear or quadratic in an appropriate sense. Such normal forms may be reformulated as the existence of Euler-like vector fields that are compatible with the given structure.  In \cite{bis:def,bur:spl}, this perspective was used to prove old and new `splitting theorems'  for Lie algebroids, Poisson structures, generalized complex structures, $L_\infty$-algebroids, and so on.  

One of the purposes of this article is  to explain applications to other types of normal form results, such as the Weinstein Lagrangian embedding theorem, normal forms for Morse-Bott  functions, the Grabowski-Rodkievicz characterization of subbundles of vector bundles, and the linearizability of proper Lie groupoids. This part of the paper is largely expository, 
showing how all of these results can be proved along very similar lines.  

Our second objective is to indicate a generalization to a \emph{weighted} version of the theory. In this paper, we shall only consider the case of degrees $2$ weightings  -- intuitively, this means that some directions normal to  $N$ will be treated 
as `second order'. More precisely, given a subbundle of the normal bundle $F\subset \nu(M,N)$, we define a filtration on functions by assigning weight $1$ to functions vanishing along $N$, and weight $2$ if furthermore the differential 
vanishes in the normal direction given by $F$. It turns out that these data determine a fiber bundle 
\[ \nu_\W(M,N)\to N\]
which we call a \emph{weighted normal bundle}. This fiber bundle is not naturally a vector bundle, but is a `graded bundle' in the sense of Grabowski-Rodkievicz \cite{gra:gra}. There is a notion of  Euler-like vector fields for this context, and given any such vector field one obtains a  tubular neighborhood embedding $\nu_\W(M,N)\subset O\to M$. 

As an application, if $(M,\omega)$ is a symplectic manifold and $N$ is an isotropic submanifold, we consider the weighting defined by the subbundle $F=TN^\omega/TN$. We find that  $\nu_\W(M,N)$  canonically inherits a symplectic 2-form  homogeneous of degree $2$. We will show how to construct a weighted Euler-like vector field $X$ satisfying $\L_X\omega=2\omega$, which then produces a tubular neighborhood embedding preserving symplectic forms. This may be seen as a version of Weinstein's \emph{isotropic embedding theorem} \cite{wei:sym1,we:nei}, but with a local model that 
does not require a  choice of connection. \medskip

{\bf Acknowledgement.} I am grateful to the referee for detailed comments and helpful suggestions. I also thank Francis Bischoff for suggesting the application  to linearizations of groupoids, and Yiannis Loizides for discussions. This research was funded by an NSERC discovery grant.

\bigskip
{\bf Notation.}
We shall use the convention that the flow $(t,m)\mapsto \Phi_t(m)$ of a vector field $X$ is described 
in terms of the Lie derivative $\L_X$ on functions $f\in C^\infty(M)$ by 
\[ \f{d}{d t} f(\Phi_{-t}(m))=(\L_X f)(\Phi_{-t}(m)),\]
with $\Phi_0(m)=m$. The flow is defined for $(t,m)$ in some neighborhood of $\{0\}\times M$ in $\R\times M$.
More generally, given a time dependent vector field $X_t$, we define $\Phi_t$ by the same formula, with $\L_X$ replaced by $\L_{X_t}$. For a vector bundle $V\to M$, the \emph{Euler vector field}  $\E$ is the vector field having scalar multiplication by $e^{-t}$ as its flow. For $V=\R^n$, this is the vector field $\E=\sum_i x_i\f{\p}{\p x_i}$.

\section{Euler-like vector fields on $\R^n$}
\subsection{Linearization of Euler-like vector fields}\label{subsec:lin}
Consider a vector field 
\begin{equation}\label{eq:X} X=\sum_{i=1}^n a_i(x)\f{\p}{\p x_i}\end{equation}
with a critical point at $0$, that is, $a_i(0)=0$. The \emph{linear approximation} of $X$ is the vector field $X_{[0]}$ 
obtained by replacing each coefficient function $a_i$ with the linear term of its Taylor approximation.  (The subscript signifies the degree of homogeneity with respect to scalar multiplication.) The vector field is called (smoothly) \emph{linearizable} 
if there exists a germ of a diffeomorphism $\varphi$ at $0$ such that $\varphi^*X=X_{[0]}$. The \emph{Sternberg linearization theorem} \cite{ste:loc,ste:str} gives a sufficient condition:  $X$ is linearizable provided that the eigenvalues $\lambda_1,\ldots,\lambda_n$  
of the matrix with entries
\[  \f{\p a_i}{\p x_j}(0)\] are \emph{non-resonant}, that is, sums $\lambda_{i_1}+\cdots+\lambda_{i_N}$ with $N>1$ 
(with possible repetitions) are \emph{not}  eigenvalues. 

Sternberg's theorem applies, in particular, when all the eigenvalues are equal to $1$. Here, the linear approximation $X_{[0]}$ is the \emph{Euler vector field}
\[ \E=\sum_{i=1}^n x_i\f{\p}{\p x_i}.\]
In this very special case, a short Moser-type argument 
is available \cite{bur:spl}, with an explicit choice for $\varphi$:

\begin{lemma}\label{lem:2}
If $X\in \mf{X}(\R^n)$ is Euler-like, there exists a unique germ at $0$ of a diffeomorphism $\varphi$ of $\R^n$ 
such that $\varphi(0)=0,\ D\varphi(0)=\on{id}$, and 
\[ \varphi^* X=\E.\]	
\end{lemma}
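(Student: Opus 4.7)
\medskip

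\noindent\textbf{Plan.} The plan is to use a short Moser-type argument based on the rescaling family of $X$. Write $X = \E + Z$, where $Z = X - \E$ vanishes to second order at $0$ by the Euler-like hypothesis. For $\lambda \in (0, 1]$ let $\mu_\lambda(x) = \lambda x$, and introduce
\[ X_\lambda := \mu_\lambda^* X, \qquad Y_\lambda := \lambda^{-1}(X_\lambda - \E). \]
Since $\mu_\lambda^* \E = \E$, one has $X_\lambda = \E + \mu_\lambda^* Z$, and applying Hadamard's lemma twice to write the components of $Z$ as $Z_i(y) = \sum_{j,k} y_j y_k h_{ijk}(y)$ with $h_{ijk}$ smooth, one checks that $(Y_\lambda)_i(x) = \sum_{j,k} x_j x_k h_{ijk}(\lambda x)$. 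Hence both families extend smoothly across $\lambda = 0$ on $[0,1] \times \R^n$, with $X_0 = \E$ and $Y_\lambda$ vanishing to second order at $x = 0$ uniformly in $\lambda$.

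\smallskip

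\noindent\textbf{Existence.} Let $\varphi_\lambda$ be the time-dependent flow of $Y_\lambda$ with $\varphi_0 = \on{id}$. Because $Y_\lambda(0) = 0$ and $(DY_\lambda)(0) = 0$, this flow exists on a fixed neighborhood of $0$ for all $\lambda \in [0,1]$ and satisfies $\varphi_\lambda(0) = 0$, $(D\varphi_\lambda)(0) = \on{id}$. The algebraic heart of the argument is the Moser identity $[Y_\lambda, X_\lambda] + \dot X_\lambda = 0$, which I would derive from two trivial observations: (i) by construction $X_\lambda = \E + \lambda Y_\lambda$, so $0 = [X_\lambda, X_\lambda]$ expands to $[\E, X_\lambda] + \lambda[Y_\lambda, X_\lambda] = 0$; and (ii) since $\mu_\lambda$ is (up to reparametrization) the flow of $\E$, a direct computation gives $[\E, X_\lambda] = \lambda \dot X_\lambda$. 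Dividing (i) by $\lambda$, using (ii), and extending by continuity to $\lambda = 0$ yields the identity. Then
\[ \frac{d}{d\lambda}(\varphi_\lambda^* X_\lambda) = \varphi_\lambda^*\bigl([Y_\lambda, X_\lambda] + \dot X_\lambda\bigr) = 0, \]
so $\varphi_\lambda^* X_\lambda$ is independent of $\lambda$; at $\lambda = 0$ it equals $\E$, and at $\lambda = 1$ it reads $\varphi^* X = \E$ for $\varphi := \varphi_1$.

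\smallskip

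\noindent\textbf{Uniqueness.} If $\varphi_1, \varphi_2$ both satisfy the conclusion, then $g := \varphi_2^{-1} \circ \varphi_1$ has $g^* \E = \varphi_1^*(\varphi_2^{-1})^* \E = \varphi_1^* X = \E$, together with $g(0) = 0$ and $(Dg)(0) = \on{id}$. Preserving $\E$ amounts to commuting with its scaling flow: $g(\lambda x) = \lambda g(x)$ for $\lambda > 0$. Smoothness of $g$ at the origin then forces every Taylor coefficient of $g$ of degree $\ne 1$ to vanish, i.e.\ $g$ is linear, and the derivative condition gives $g = \on{id}$, so $\varphi_1 = \varphi_2$ as germs.

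\smallskip

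\noindent\textbf{Main obstacle.} The only delicate step is the smooth extension of $Y_\lambda(x) = \lambda^{-2} Z(\lambda x)$ across $\lambda = 0$; this is exactly where the second-order vanishing of $Z$—that is, the Euler-like hypothesis—enters in an essential way. Once that is granted via Hadamard, the whole argument collapses to the tautological identity $X_\lambda = \E + \lambda Y_\lambda$ and one Moser integration. No cohomological equations for $\on{ad}_\E$ and no delicate flow estimates are required.
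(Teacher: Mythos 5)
Your proof is correct and follows essentially the same Moser-type argument as the paper: the rescaled family $X_\lambda=\mu_\lambda^*X$, the second-order-vanishing field $Y_\lambda=\lambda^{-1}(X_\lambda-\E)$ (the paper's $W_t$), the identity $[Y_\lambda,X_\lambda]+\dot X_\lambda=0$, integration of the time-dependent flow, and linearity of germs commuting with scalar multiplication for uniqueness. The only differences are cosmetic (deriving the Moser identity from $[X_\lambda,X_\lambda]=0$ and the choice of flow convention).
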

\begin{proof} 
Since $X$ has linear approximation $X_{[0]}=\E$, the family of  vector fields, for $t\neq 0$,
\[ X_t=\sum_{i=1}^n \f{a_i(t x)}{t}\ \f{\p}{\p x_i}\]
extends smoothly to $t=0$, by $X_0=\E$. The $t$-derivative of this vector field is 
\[ \f{d X_t}{d t}=\f{1}{t}[\E,\ X_t].\]
Define a family of vector fields,  for $t\neq 0$,
\begin{equation} W_t=\f{1}{t} (X_t-\E)=\sum_{i=1}^n \f{a_i(t x)-t x_i }{t^2}\ \f{\p}{\p x_i};\end{equation}
again this extends smoothly to $t=0$. Let $t\mapsto \Phi_t$ be the (germ of) flow of the time dependent vector field $W_t$, with $\Phi_0=\on{id}$. 
The calculation 
\[ \f{d}{d t} \Phi_{-t}^* X_t=\Phi_{-t}^* \Big([W_t,X_t]+\f{d X_t}{d t}\Big)
=\Phi_{-t}^* \big([W_t+\f{1}{t}\E,X_t]\big)=\Phi_{-t}^* \big[\f{1}{t}X_t,X_t\big]=0\]
shows that $ \Phi_{-t}^* X_t$ is constant. Hence, $\varphi=\Phi_{-1}$ satisfies 
\[ \varphi^*X=\Phi_{-1}^*X_1=\Phi_0^*X_0=\E.\]

For any given $t$, the vector field $W_t$ vanishes to 
second order at $x=0$, hence its flow satisfies $D\Phi_t(0)=\on{id}$ for all $t$; hence also $D\varphi(0)=\on{id}$. 
This proves existence; the uniqueness follows because the germ at $0$ of a diffeomorphism of $\R^n$ preserving the Euler vector field is the germ of a 
map commuting with scalar multiplication, hence the germ of a \emph{linear} map. 
\end{proof}

\subsection{Applications}
Here are some applications of the lemma to well-known classical results. 

\begin{example}[Morse Lemma \cite{mor:cal}] \label{ex:morse}
	Let $f\in C^\infty(\R^n)$ be a smooth function with a non-degenerate critical point at $0$, and with $f(0)=0$.  
	Morse's Lemma 
	states that there is a diffeomorphism $\varphi$, defined near $0$, such that $\varphi^*f$ is quadratic.   
	Here is a proof using 	Euler-like vector fields. 
	By Taylor's theorem
	\[ f(x)=\hh \sum_{ij} A_{ij}(x) x_i x_j\]
	for a smooth matrix-valued function $x\mapsto A(x)$ with $A_{ij}(x)=A_{ji}(x)$. 
	The derivatives of $f$ are
	\[ \f{\p f}{\p x_j}=\sum_k B_{jk}(x) x_k\]
	with $B_{jk}=A_{jk}+\hh \sum_r \f{\p A_{jk}}{\p x_r}x_r$. Since $B(0)=A(0)$ is invertible, the matrix-valued function $x\mapsto B(x)$ is invertible for $x$ close to $0$. Consider the vector field, defined for $x$ close to $0$,  
	\[ X=\sum_{ij} (A(x) B(x)^{-1})_{ij} x_i \f{\p}{\p x_j}.\]
	Since $A B^{-1}$ is the identity matrix up to higher order terms, 
	this vector field is Euler-like, hence it determines a germ of a diffeomorphism $\varphi$ with $\varphi^*X=\E$. On the other hand, $X$ satisfies $\L_X f=2 f$. Pulling back under $\varphi$, we obtain $\L_\E\varphi^* f=2\varphi^*f$.  But this just means that $\varphi^*f$  is homogeneous of degree $2$, i.e., quadratic.  	Q.E.D.\bigskip

See \cite{arn:wav,ban:pro,pal:mor} for Moser-type proofs of Morse's lemma; in our approach the Moser argument is incorporated in Lemma \ref{lem:2}. Note that this argument gives a canonical choice for the 
coordinate change $\varphi$. 
\end{example}

The following proof of Darboux's theorem elaborates on a remark in the introduction of \cite{hig:eul}. 
See \cite{wei:sym} for the standard `Moser-type' proof. 

\begin{example}[Darboux theorem]
Let $\omega\in \Omega^2(\R^n)$ be a closed 2-form, which is nondegenerate at $0$. 	
Darboux's theorem states that there exists a germ of a diffeomorphism $\varphi$ such that $\varphi^*\omega$ is constant. 
Using Euler-like vector fields, this can be proved as follows.  Let $\alpha\in \Omega^1(\R^n)$ be the primitive of $\omega$ given by Poincare's lemma, and let $X\in\mf{X}(\R^n)$ be the vector field defined by $\iota_X\omega=2\alpha$.  
From the coordinate expressions
\[ \omega=\hh \sum_{ij}\omega_{ij}\d x_i \d x_j+\ldots,\ \  \alpha=\hh \sum_{ij} \omega_{ij} x_i\d x_j +\ldots,\]
(where the dots indicate terms cubic and higher in $x_i$) we see that $X$ equals the Euler vector field $\E=\sum x_i\f{\p}{\p x_i}$ up to higher order terms. Thus, $X$ is Euler-like, and determines a germ of a diffeomorphism $\varphi$ 
such that $\varphi^*X=\E$. On the other hand, pulling back the identity 
\[ \L_X\omega=\d\iota_X\omega=2\d\alpha=2\omega\]
under $\varphi$ we obtain $\L_\E\varphi^*\omega=2\varphi^*\omega$. This means that 
the 2-form $\varphi^*\omega$ is homogeneous of degree $2$, hence its coefficients are constant. Q.E.D.
\end{example}


\subsection{Weighted Euler-like vector fields on $\R^n$}
The discussion in Section \ref{subsec:lin} has the following generalization to a  \emph{weighted} (or \emph{quasi-homogeneous}) setting. 
Let $(\mathsf{w}_1,\ldots,\mathsf{w}_n)$ with $\mathsf{w}_i\in \N$ be a `weight sequence', and consider the corresponding weighted scalar multiplication 
\begin{equation}
\kappa_t\colon \R^n\to \R^n,\ (x_1,\ldots,x_n)\mapsto (t^{\mathsf{w}_1}x_1,\ldots,t^{\mathsf{w}_n}x_n).
\end{equation}
Then $s\mapsto \kappa_{\exp(-s)}$ is the flow of the weighted Euler vector field
\[ \E=\sum_{i=1}^n\, \mathsf{w}_i x_i \f{\p}{\p x^i}\]
A vector field $X=\sum_i a_i(x)\f{\p}{\p x_i}$ will be called \emph{weighted Euler-like} if  $\lim_{t\to 0} \kappa_t^*X=\E$. 
Equivalently, $\lim_{t\to 0} t^{-\mathsf{w}_i} a_i(\kappa_t(x))=\mathsf{w}_i x_i$ for $i=1,\ldots,n$.    We think of $X$ as a perturbation of $\E$ by higher order terms, where the `order' is the weighted order of vanishing. 

\begin{example} Consider $\R^2$ with weights $(\mathsf{w}_1,\mathsf{w}_2)=(1,2)$.  The vector field 
	\begin{equation}\label{eq:a} (x+y^m)\f{\p}{\p x}+2 y\f{\p}{\p y}\end{equation}
is weighted Euler-like for all $m\in \N$ since the perturbation term $y^m\f{\p}{\p x}$ has total weight $2m-1>0$. 
On the other hand, 
\begin{equation}\label{eq:b} x\f{\p}{\p x}+(2 y+x^m)\f{\p}{\p y}\end{equation}
is weighted Euler-like  only if $m\ge 3$, since the perturbation term $x^m\f{\p}{\p y}$ has total weight $m-2$.  Note that these 
are resonant cases from the perspective of Sternberg's theorem. 
\end{example}

\begin{lemma}\label{lem:3}
	If $X\in \mf{X}(\R^n)$ is weighted Euler-like for some weight $(\mathsf{w}_1,\ldots,\mathsf{w}_n)$, there exists a germ at $0$ of a diffeomorphism of $\R^n$ 
	such that $\varphi(0)=0,\ D\varphi(0)=\on{id}$, and 
	\[ \varphi^* X=\E.\]	
\end{lemma}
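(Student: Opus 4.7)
The plan is to adapt the Moser-type argument of Lemma~\ref{lem:2}, replacing ordinary scalar multiplication by the weighted scaling $\kappa_t$. Writing $X = \sum_i a_i(x)\partial_{x_i}$, set $X_t = \kappa_t^* X = \sum_i t^{-\mathsf{w}_i}\,a_i(\kappa_t(x))\,\partial_{x_i}$ for $t \neq 0$; the weighted Euler-like hypothesis is precisely the assertion $X_t \to \E$ as $t \to 0$. Since $\E$ generates $\kappa_t$, we still have $\kappa_s^* \E = \E$, and the same computation as before yields
\[
\frac{dX_t}{dt} \;=\; \frac{1}{t}\,[\E, X_t].
\]
Define $W_t = \frac{1}{t}(X_t - \E)$, so that $W_t + \frac{1}{t}\E = \frac{1}{t}X_t$, and let $\Phi_t$ be the flow of $W_t$ with $\Phi_0 = \mathrm{id}$. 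The identical Moser computation
\[
\frac{d}{dt}\Phi_{-t}^* X_t \;=\; \Phi_{-t}^*\bigl([W_t, X_t] + \tf{1}{t}[\E, X_t]\bigr) \;=\; \Phi_{-t}^*\bigl[\tf{1}{t}X_t, X_t\bigr] \;=\; 0
\]
shows $\Phi_{-t}^* X_t$ is constant, so $\varphi := \Phi_{-1}$ satisfies $\varphi^* X = \E$; since each $a_i$ vanishes at $0$ one has $W_t(0) = 0$, giving $\varphi(0) = 0$.

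The principal obstacle, and the only step that departs essentially from Lemma~\ref{lem:2}, is justifying that $X_t$ and $W_t$ extend smoothly through $t = 0$. This rests on a weighted Hadamard-type lemma: if $a \in C^\infty(\R^n)$ has Taylor expansion at $0$ whose monomials all have weighted degree $\geq k$, then $t^{-k}\,a(\kappa_t(x))$ extends to a smooth function on a neighborhood of $\{0\}\times \R^n$. The weighted Euler-like hypothesis forces each $a_i$ to have weighted order $\geq \mathsf{w}_i$ with weight-$\mathsf{w}_i$ part equal to $\mathsf{w}_i x_i$; the lemma applied to $a_i$ gives the smoothness of $X_t$, and applied to $a_i - \mathsf{w}_i x_i$ (of weighted order $\geq \mathsf{w}_i + 1$) gives the smoothness of $W_t$. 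The weighted Hadamard statement itself follows from an integral form of the weighted Taylor expansion built on the identity $\frac{d}{ds}a(\kappa_s(x)) = \frac{1}{s}(\L_\E a)(\kappa_s(x))$, iterated in the usual way.

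The assertion $D\varphi(0) = \mathrm{id}$ must be read in the weighted sense: the leading weighted-homogeneous approximation of $\varphi$ with respect to $\kappa_t$ is the identity. In the ordinary sense this can fail, as is already visible for $X = (x+y)\partial_x + 2y\partial_y$ with weights $(1,2)$, where the construction produces $\varphi(x,y) = (x+y, y)$; however, the weight-$1$ part of the first coordinate is $x$ and the weight-$2$ part of the second is $y$, so the weighted leading part is indeed the identity. The underlying reason is that $W_t$, in the weighted filtration, vanishes to weighted order $\geq 2$ at the origin, so its flow differs from the identity only by weighted higher-order terms.
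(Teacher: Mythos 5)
Your proof is essentially the paper's: the proof of Lemma \ref{lem:3} runs exactly this Moser argument with $X_t=\kappa_t^*X$ and $W_t=\frac{1}{t}(X_t-\E)$, merely asserting the smooth extension through $t=0$ that you justify via the weighted Hadamard lemma. Your weighted reading of the condition $D\varphi(0)=\on{id}$ is also the right one: as your example $(x+y)\f{\p}{\p x}+2y\f{\p}{\p y}$ shows, the condition cannot hold in the literal sense in general, which is consistent with the paper's own coordinate change $\ti{x}=x-y,\ \ti{y}=y$ for that case and with the remark following the lemma, where uniqueness is formulated via the weighted linear approximation $\kappa_t^{-1}\circ\varphi\circ\kappa_t\to\on{id}$.
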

\begin{proof}
The proof is essentially the same as for the unweighted case,  Lemma \ref{lem:2}. We observe that the family of vector fields 
\[X_t=\kappa_t^*X=\sum_{i=1}^n \frac{a_i(\kappa_t(x))}{t^{\mathsf{w}_i}}\f{\p}{\p x_i}\]
extends smoothly to $t=0$, with $X_0=\E$, and that $\f{ d X_t}{d t}=\f{1}{t}[\E,X_t]$. We obtain a time dependent vector field $W_t$, given by $W_t=\f{1}{t}(X_t-\E)$ for $t\neq 0$. Letting $\Phi_t$ be its flow, the same calculation as in the unweighted case shows that $\Phi_{-t}^* X_t$ is constant. 
Consequently, $\varphi=\Phi_{-1}$ has the   desired properties. 
\end{proof}
\begin{example}
In particular, we see that vector fields \eqref{eq:a} for $m\ge 1$ and \eqref{eq:b} for $m\ge 3$ are 
linearizable.  The coordinate changes are explicitly given by 
$\ti{x}=x+y^m/(1-2m),\ \ti{y}=y$ for \eqref{eq:a}, and $\ti{x}=x,\ \ti{y}=y+x^m/(m-2)$ 
for \eqref{eq:b}. Note that even though \eqref{eq:a} for $m=1$ is linear in the usual sense, 
 the term $y\f{\p}{\p x}$ is considered a higher order perturbation for the weighted setting, 
and Lemma \ref{lem:3} gives a diffeomorphism removing this term. On the other hand,  
\eqref{eq:b} for $m=2$ is known to be a non-linearizable vector field.  The non-linearizability may be seen by explicitly solving the corresponding ODE, and comparing the resulting phase portrait with that of the linearized ODE.
\end{example}

\begin{remark}
The diffeomorphism $\varphi$ in Lemma \ref{lem:3} is not unique: 
in the example of $\R^2$ with weights $(1,2)$, the nonlinear diffeomorphism $\psi(x,y)=(x,y+x^2)$ commutes with $\kappa_t$, hence preserves $\E$, but $D\psi(0)=\on{id}$. Thus, if $\varphi^*$ takes $X$ to $\E$, then so does 
$(\varphi\circ \psi)^*$. 

However, one finds that $\varphi$ is uniquely determined if we require that its \emph{weighted} linear approximation is the identity, in the sense that 
$\kappa_t^{-1}\circ \varphi\circ \kappa_t\to \on{id}$ for $t\to 0$. 
\end{remark}

\section{Euler-like vector fields for submanifolds}
Consider the category of manifold pairs. Objects in this category are pairs $(M,N)$ consisting of a 
 manifold $M$ with a closed submanifold $N$; the morphisms 
$\Phi\colon (M',N')\to (M,N)$ in this category are the smooth maps $\Phi\colon M'\to M$ taking $N'$ into $N$.
The \emph{normal bundle functor} associates to every object $(M,N)$ the normal bundle \[ \nu(M,N)=TM|_N/TN,\] 
and to every morphism $\Phi\colon (M',N')\to (M,N)$  the corresponding vector bundle morphism $\nu(\Phi)\colon \nu(M',N')\to \nu(M,N)$. For example, any vector field $X\in\mf{X}(M)$ that is tangent to $N$ may be seen as a morphism 
$X\colon (M,N)\to (TM,TN)$, and applying  the normal bundle functor 
\[ \nu(X)\colon \nu(M,N)\to \nu(TM,TN)\cong T\nu(M,N)\]
we obtain the linear approximation 
$X_{[0]}=\nu(X)\in \mf{X}(\nu(M,N))$.
\begin{definition}\cite{bur:spl}
A vector field	$X\in\mf{X}(M)$ is \emph{Euler-like} for $(M,N)$ if it is tangent to $N$, with 
linear approximation  $X_{[0]}=\E$ the Euler vector field of the normal bundle. 
\end{definition}

Equivalently, Euler-like vector fields are characterized by the condition that for all functions $f$ vanishing to order $k$ on $N$,
\begin{equation}\label{eq:almostlinear} \L_X f-k\, f\ \mbox{vanishes to order $k+1$ on $N$}.\end{equation} 
Actually, it suffices to check this condition for $k=1$. 

Let $O\subset \nu(M,N)$ be a star-shaped open neighborhood of the zero section (that is, $O$ is 
invariant under multiplication by scalars in $[0,1]$).  
A morphism $\varphi\colon (O,N)\to (M,N)$  is called a \emph{tubular neighborhood embedding} if $\varphi$ is an embedding 
as an open subset $\varphi(O)=U\subset M$, and the  linear approximation $\nu(\varphi)$ is the identity map on $\nu(M,N)$, using the standard identification $\nu(O,N)\cong \nu(M,N)$. (The normal bundle of the zero section of a vector bundle is canonically isomorphic to the vector bundle itself.) The tubular neighborhood embedding will be called \emph{complete} if $O=\nu(M,N)$. 

Suppose $X$ is Euler-like for $(M,N)$, and let $\Phi_s^X$ be its (local) flow.

%
\begin{theorem}\cite{bur:spl}\label{th:euler}
	An Euler-like vector field $X$ for $(M,N)$ determines a unique maximal tubular neighborhood embedding 
	\[ \nu(M,N)\supseteq O\stackrel{\varphi}{\lra}  M\] 
	such that $\varphi^*X=\E$. Its image is the 
	open subset $U$ consisting of all $m\in M$ for which the limit $\lim_{s\to \infty}\Phi_s^X(m)$
	exists and lies in $N$. 
	The construction is functorial: Given a morphism $\Phi\colon (M',N')\to (M,N)$, and $\Phi$-related Euler-like vector fields $X,X'$, the corresponding tubular neighborhood embeddings $\varphi\colon O\to M,\ \varphi'\colon O'\to M'$ satisfy
$\Phi(O')\subset O$ and 
	$\varphi'\circ \nu(\Phi)=\Phi\circ \varphi$. 
\end{theorem}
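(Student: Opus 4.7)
The plan is to combine a parametric version of Lemma~\ref{lem:2} with a flow-extension argument using the attractive dynamics of Euler-like vector fields.

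\emph{Local construction.} Start from any classical tubular neighborhood embedding $\psi_0\colon O_0 \to M$, and pull back to obtain $\widetilde{X} = \psi_0^* X$ on $O_0 \subseteq \nu(M,N)$. Because $\nu(\psi_0) = \on{id}$, the vector field $\widetilde{X}$ is Euler-like with respect to the zero section. Now run the proof of Lemma~\ref{lem:2} parametrically over $N$: the family $\widetilde{X}_t := \kappa_t^* \widetilde{X}$ extends smoothly to $t = 0$ as $\E$, and $W_t := t^{-1}(\widetilde{X}_t - \E)$ extends smoothly to $t = 0$ because the order-two vanishing along $N$ absorbs the factor of $t$. Its flow is defined on an open neighborhood of $\{0\} \times N$ in $[0,1] \times O_0$, and the Moser computation gives $(\Phi_{-1}^W)^* \widetilde{X} = \E$. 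Composing with $\psi_0$ yields a tubular neighborhood embedding $\varphi\colon O_{\text{loc}} \to M$ on a star-shaped open neighborhood of $N$ with $\varphi^* X = \E$ and $\nu(\varphi) = \on{id}$.

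\emph{Flow extension to the maximal domain.} The identity $\varphi^* X = \E$ translates to the flow intertwining $\varphi \circ \kappa_{e^{-s}} = \Phi_s^X \circ \varphi$ wherever both sides are defined. Define $O$ to be the set of $v \in \nu(M,N)$ such that $\kappa_{e^{-s}} v \in O_{\text{loc}}$ for some $s \geq 0$, and extend $\varphi$ to $O$ by the formula
\[ \varphi(v) := \Phi_{-s}^X\bigl(\varphi(\kappa_{e^{-s}} v)\bigr). \]
Independence of $s$ follows from the intertwining already established on $O_{\text{loc}}$. Conversely, a point $m \in M$ lies in $\varphi(O)$ precisely when $\Phi_s^X(m)$ is defined for all $s \geq 0$ and converges to a point of $N$, because $\Phi_s^X(\varphi(v)) = \varphi(\kappa_{e^{-s}} v)$ tends into the zero section as $s \to \infty$. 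This identifies the image with the claimed basin of attraction $U$. Smoothness of the extension and the open-embedding property follow because $\Phi_{-s}^X$ and $\kappa_{e^{s}}$ are local diffeomorphisms and the local $\varphi$ already is one.

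\emph{Uniqueness and functoriality.} Given two tubular neighborhood embeddings $\varphi_1, \varphi_2$ with $\varphi_i^* X = \E$, the germ of $\varphi_2^{-1} \circ \varphi_1$ at $N$ is a self-map of $\nu(M,N)$ fixing $N$, with normal linear approximation the identity, that preserves $\E$; preserving $\E$ means commuting with $\kappa_t$ for all $t$, which forces the germ to agree with its linear approximation on every orbit, hence to be the identity. The flow intertwining then propagates this equality to all of the maximal domain. Functoriality is a direct consequence of the naturality of each step: if $\Phi\colon (M', N') \to (M, N)$ intertwines $X'$ and $X$, then $\nu(\Phi)$ intertwines the Euler vector fields, the Moser time-dependent vector fields $W'_t$ and $W_t$ are $\nu(\Phi)$-related, and the basin-of-attraction description of the images forces $\Phi(O') \subseteq O$ together with $\varphi' \circ \nu(\Phi) = \Phi \circ \varphi$.

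\emph{Expected obstacle.} The main technical point is verifying that the flow-extended $\varphi$ is genuinely an open embedding onto the basin of attraction, rather than merely a smooth surjection: one must rule out identifications between points lying on distinct $\E$-orbits, which uses that the local $\varphi$ already separates such orbits and that $\Phi_s^X$ is a diffeomorphism on its domain of definition. Once this is in place the remaining verifications are routine.
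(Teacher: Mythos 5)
Your overall strategy -- localize by an arbitrary classical tubular neighborhood embedding, run the Moser argument of Lemma~\ref{lem:2} with parameters along $N$, then flow out to a maximal domain and identify the image with the basin of attraction -- is exactly the route the paper intends (it refers to \cite{bur:spl} and notes the proof is analogous to Lemma~\ref{lem:2}). But two steps, as written, do not hold up. First, your definition of the maximal domain is wrong: since $O_{\mathrm{loc}}$ is an open neighborhood of the zero section and $\kappa_{e^{-s}}v\to \pi(v)\in N$ as $s\to\infty$, \emph{every} $v\in\nu(M,N)$ satisfies ``$\kappa_{e^{-s}}v\in O_{\mathrm{loc}}$ for some $s\ge 0$'', so your $O$ is all of $\nu(M,N)$; yet the embedding need not be complete (delete a point of $M\setminus N$ to see this), and the extension formula $\Phi^X_{-s}\bigl(\varphi(\kappa_{e^{-s}}v)\bigr)$ can simply be undefined because the backward flow may leave $M$ before time $s$. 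The domain has to be cut down by the existence requirement: $v\in O$ iff there is $s\ge 0$ with $\kappa_{e^{-s}}v\in O_{\mathrm{loc}}$ \emph{and} $\Phi^X_{-s}$ defined at $\varphi(\kappa_{e^{-s}}v)$. With that correction your subsequent points (independence of $s$, star-shapedness, injectivity by flowing forward into $O_{\mathrm{loc}}$, image $=U$) do go through, but none of them can even be formulated before this well-definedness issue is settled. (Minor: the Moser flow must be defined on a neighborhood of $[0,1]\times N$, not just of $\{0\}\times N$; this holds because $W_t$ vanishes along $N$, but it should be said.)

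Second, the functoriality argument breaks where you claim that $W'_t$ and $W_t$ are $\nu(\Phi)$-related: these time-dependent vector fields are built from the \emph{arbitrary} initial embeddings $\psi'_0,\psi_0$, which $\Phi$ does not intertwine, so there is no such relatedness. The correct argument uses tools you already have. From $X'\sim_\Phi X$ and $\Phi(N')\subseteq N$, forward $X'$-trajectories are mapped to forward $X$-trajectories with limits in $N$, so the basin description gives $\Phi(\varphi'(O'))\subseteq\varphi(O)$; then $\Psi:=\varphi^{-1}\circ\Phi\circ\varphi'\colon O'\to O$ relates the Euler vector fields and satisfies $\nu(\Psi)=\nu(\Phi)$ (because $\nu(\varphi)=\nu(\varphi')=\on{id}$), and the same rigidity you invoked for uniqueness, $\Psi=\lim_{t\to 0}\kappa_t^{-1}\circ\Psi\circ\kappa_t=\nu(\Psi)$ on the star-shaped $O'$, yields $\Psi=\nu(\Phi)$, i.e.\ $\nu(\Phi)(O')\subseteq O$ and $\varphi\circ\nu(\Phi)=\Phi\circ\varphi'$. (This is the only way the primes can be placed so that the composition typechecks; the formula you copied from the statement has them switched.)
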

Note that if $X$ is complete, then $U$ is invariant under $\Phi_s^X$, and hence 
the tubular neighborhood embedding $\varphi$ is complete. \medskip

According to the theorem, tubular neighborhood embeddings and Euler-like vector fields are essentially the same thing. 
%
The proof of Theorem \ref{th:euler} given in \cite{bur:spl} is analogous to the proof of Lemma \ref{lem:2}. The proof was given a clear geometric interpretation in the work of  Haj-Higson \cite{hig:eul}, using deformation spaces, further details may be found in  \cite{bis:def}.

\begin{remark}
	Euler-like vector fields for submanifolds appear in the 2016 paper by Nguyen Viet Dang \cite{dan:ext}, under the name of `Euler vector fields'. Theorem \ref{th:euler} may be deduced from the local uniqueness result \cite[Theorem 1.3]{dan:ext}.
\end{remark}

 The construction of a normal form for a given geometric structure along a submanifold  is typically the construction of a tubular neighborhood embedding in which that structure has a suitable homogeneity property (typically, linear or quadratic). By the theorem, this amounts to the construction of an Euler-like vector field $X\in\mf{X}(M)$ such that the given structure has a corresponding infinitesimal homogeneity property with respect to $X$. This reformulation of the problem can be useful.  For example, given an action of a compact Lie group $G$ preserving the structure, one can average $X$ under the group action, thus obtaining a $G$-invariant Euler-like vector field. The corresponding $\varphi$ will then produce a $G$-equivariant normal form.
 In particular, one immediately gets $G$-equivariant versions of  the Morse lemma and of Darboux's theorem.
 
\begin{example} 
Suppose $Y\in\mf{X}(M)$ is a vector field tangent to $N$, and let $\nu(Y)\in\mf{X}(\nu(M,N))$ be its linear approximation. 
A \emph{linearization} of $Y$  is a tubular neighborhood embedding $\varphi$ such $\varphi^* Y=\nu(Y)$. By Theorem \ref{th:euler}, this is equivalent to the existence of an Euler-like vector field $X$ for the pair $(M,N)$ such that $[X,Y]=\L_X Y=0$ near $N$. Indeed, in the tubular neighborhood embedding determined by $X$ we then have that $\L_\E( \varphi^*Y)=
\varphi^*(\L_X Y)=
0$, which means that $\varphi^*Y$ is linear and therefore coincides with the linear approximation. For $N=\pt$, this observation was already used by Guillemin-Sternberg in their 1968 paper \cite{gui:rem}. Note that for a compact Lie group $G$ acting on $M$, preserving the submanifold $N$, if a $G$-invariant vector field $Y$ is linearizable along $N$ then it is also equivariantly linearizable.  
\end{example}

\section{Applications}
\subsection{Weinstein's  Lagrangian neighborhood theorem}\label{subsec:weinstein}
	Let $(M,\omega)$ be a symplectic manifold, and $N\subset M$ a Lagrangian submanifold. Since 
	$\omega$ pulls back to zero on $N$, it has a well-defined linear approximation 
	\[ \omega_{[1]}\in  \Omega^2_\lin(\nu(M,N)),\] 
	which is a linear 2-form on the normal bundle. Using that $N$ is Lagrangian, one finds that 
	$\omega_{[1]}$ is symplectic along the zero section, hence also on a neighborhood of the zero section, and 
	consequently everywhere (by homogeneity). 
	Weinstein's normal form theorem \cite{wei:sym1} asserts the existence of a tubular neighborhood embedding 
	$\nu(M,N)\supset O\to M$ preserving the symplectic structures. 
 For a proof using Theorem \ref{th:euler}, we may assume, by choosing an initial tubular neighborhood embedding, that $M$ is a star-shaped open neighborhood of the zero section of $N$ in $\nu(M,N)$. 
	Let $\alpha\in \Omega^1(M)$ be the primitive of $\omega$, defined by the homotopy operator: 
	\[ \alpha=\int_0^1 \f{1}{t} \kappa_t^*\iota_\E \omega\ d t,\]
	where $\kappa_t$ is scalar multiplication by $t$ on $\nu(M,N)$. 
	Then $\alpha$ pulls back to zero on $N$. Taking the linear approximation of this equation, and using $t^{-1} \kappa_t^*\omega_{[1]}=\omega_{[1]}$, we see that $\alpha_{[1]}=\iota_\E \omega_{[1]}$. 
	Define a vector field $X\in\mf{X}(M)$ by  
	\begin{equation}\label{eq:X}
\alpha=	\iota_{X}\omega.
	\end{equation}
	Then we also have $\alpha_{[1]}=\iota_{X_{[0]}}\omega_{[1]}$, and hence $X_{[0]}=\E$ (since $\omega_{[1]}$ is symplectic). 
	 It follows that $X$ is Euler-like, and defines a tubular neighborhood embedding $\varphi\colon O\to M$. Since (on a neighborhood of $N$) 
   \[ \L_\E(\varphi^*\omega)=\varphi^*(\L_X \omega)=\varphi^*(\d \iota_X \omega)=\varphi^*\d\alpha=\varphi^*\omega,\]
   we see that $\varphi^*\omega$ is linear on a neighborhood of the zero section, and hence coincides there with the linear approximation of $\omega$. 
   That is, $\varphi^*\omega=\omega_{[1]}$ near $N$. Q.E.D.\smallskip

\begin{remark} \label{rem:canonical}
	For any vector bundle $V\to N$ with a linear symplectic form $\omega\in \Omega^2(V)$, the restriction of the 2-form to a bilinear form on $TV|_N=V\oplus TN$ 
	gives a non-degenerate pairing between 
	$V$ and $TN$. The resulting vector bundle isomorphism 
	$V\to T^*N$ is symplectic, for the canonical symplectic form on the cotangent bundle. For the case at hand, it follows that 
	$\nu(M,N)$ with the symplectic form $\omega_{[1]}$ is canonically isomorphic to $T^*N$, leading to the  
	more standard formulation of Weinstein's theorem.
 \end{remark}

\subsection{Morse-Bott functions}
Let $N\subset M$ be a closed submanifold. If $f\in C^\infty(M)$ vanishes to second order along $N$, we have the quadratic approximation $f_{[2]}\in C^\infty(\nu(M,N))$. That is, $f_{[2]}$ is fiberwise a homogeneous polynomial of degree $2$. 
The corresponding symmetric bilinear form  on the fibers $\nu(M,N)_x$ is the \emph{normal Hessian}  
\[ \on{Hess}(f)_x\colon \nu(M,N)_x\times \nu(M,N)_x\to \R.\]
The function $f$ is called \emph{Morse-Bott} along $N$ if the normal Hessian is non-degenerate for all $x\in N$.  The Morse-Bott Lemma states that in this case, there exists a tubular neighborhood embedding $\varphi\colon \nu(M,N)\supseteq O\to M$ such that $\varphi^* f=f_{[2]}$. 
By the usual strategy, this is proved once we have an Euler-like vector field for $N$ satisfying 
\begin{equation}\label{eq:quadratic} \L_X f=2f\end{equation} 
near $N$. Indeed, in the resulting tubular neighborhood embedding we have $\varphi^*X=\E$, hence $\L_\E \varphi^*f=\phi^*\L_X f=2\varphi^*f$, which means that $\varphi^*f$ is homogeneous of degree $2$ and hence coincides with its quadratic approximation $f_{[2]}$. To construct $X$, we may, for example, use an initial tubular neighborhood embedding to reduce to the 
case that $M$ is a star-shaped open neighborhood of $N\subset \nu(M,N)$, and then apply the construction from 
Example \ref{ex:morse} fiberwise.

\subsection{Grabowski-Rodkievicz theorem}
Let $E\to M$ be a vector bundle. A theorem of Grabowski-Rodkievicz \cite{gra:hig} asserts that a closed submanifold $F\subset E$ is a vector subbundle of $E$ if and only if it is invariant under the scalar multiplication $\kappa_t$ for $E$:
\begin{equation}\label{eq:Finvariant} \kappa_t(F)\subset F,\ \ \ t\in\R. \end{equation}
Using Euler-like vector fields, this may be seen as follows. Note first that the intersection $N=F\cap M$ 
is a submanifold of $F$, since it is the range of the smooth projection $\kappa_0|_F\colon F\to F$. (See, for example, \cite[Theorem 1.13]{kol:nat}.) Applying the normal bundle functor to the morphism $(F,N)\to (E,M)$ given by the inclusion of $F$,  we obtain an inclusion $\nu(F,N)\hra \nu(E,M)$ as a vector subbundle along $N$. Let $X$ be the Euler vector field of $E$. It defines a (complete) tubular neighborhood embedding $\varphi_X\colon \nu(E,M)\to E$. The invariance \eqref{eq:Finvariant} implies that $X$ is tangent to $F$, so the restriction $Y=X|_F$ is well-defined. Its linear approximation $\nu(Y)\in \mf{X}(\nu(F,N))$ is the restriction of $\nu(X)\in \mf{X}(\nu(E,M))$, so it is the Euler vector field on $\nu(F,N)$. In particular, $Y$ is Euler-like for $(F,N)$. 
The functoriality statement of Theorem \ref{th:euler} shows that the tubular neighborhood embeddings $\varphi_X,\varphi_Y$ defined by $X,Y$ fit into a commutative diagram:
\[ \xymatrix@C=10ex{\nu(F,N)\ar[r]_{\varphi_Y} \ar[d] & F\ar[d]\\ \nu(E,M)\ar[r]_{\varphi_X} & E}\]  
The map $\varphi_Y$ is a bijection, by equivariance with respect to scalar multiplication.  
Since  $\varphi_X$ is just the standard identification, and in particular is an isomorphism of vector bundles over $M$, and since $\nu(F,N)$ is a subbundle of $\nu(E,M)$ along $N$, it follows that its image $F$ is a vector subbundle of $E$ along $N$. Q.E.D.

\subsection{Linearization of proper Lie groupoids} \label{subsec:proper}
Let us next turn to linearization questions for Lie groupoids $G\rra M$. For basic information on Lie groupoids, we refer to \cite{moe:fol}.  We will denote the source and target maps by $\sz,\tz\colon G\to M$, respectively; the groupoid multiplication of elements $g,h\in G$ with $\sz(g)=\tz(h)$ is written as $g\circ h$ or simply $gh$. The Lie algebroid of $G$ is the normal bundle 
$A=\nu(G,M)$; the isomorphism $\ker(T\tz)\cong \sz^*A$ (`left-trivialization') identifies the left-invariant vector fields on $G$ with the sections of $A$. One knows that  
the infinitesimal automorphisms of a Lie groupoid $G\rra M$ are exactly the vector fields $X\in\mf{X}(G)$ that are 
\emph{multiplicative}: For all composable elements $g,h\in G$ the tangent vectors $X_g,\,X_h$ are composable elements of the tangent groupoid $TG\rra TM$, and 
\[ X_g\circ X_h=X_{gh}.\]
Equivalently, $X$ is multiplicative if and only if the vector field $(X,X)$ on $G^2$ restricts to a vector field $X^{(2)}$ on the submanifold $G^{(2)}$ 
of composable arrows, with $X^{(2)}\sim_{\Mult_G} X$ under the groupoid multiplication $\Mult_G\colon G^{(2)}\to G$.
Multiplicative vector fields are tangent to $M\subset G$, with a restriction $X_M\in\mf{X}(M)$, and 
they satisfy $X\sim_\sz X_M$ and $X\sim_{\tz} X_M$.

Suppose $N\subset M$ is an  invariant closed submanifold, that is, $\sz^{-1}(N)=\tz^{-1}(N)$. (The invariant submanifolds are unions of orbits of $G$.)  Then $G|_N=\sz^{-1}(N)$ is a closed subgroupoid of $G$. The normal bundle of $G|_N$ inside $G$  is a Lie groupoid $ \nu(G,G|_N)\rra \nu(M,N)$,  with structure maps obtained by applying the normal bundle functor to the structure maps of $G$. 
We think of $\nu(G,G|_N)$ as the linear approximation of $G$ along $N$. The Lie groupoid  $G$ is called 
\emph{linearizable along $N$} if there exists a tubular neighborhood embedding 
\[ \nu(G,G|_N)\supset O\stackrel{\varphi}{\ra} G,\]
such that $O$ is an open subgroupoid containing $N$, and $\varphi$ is a groupoid morphism. 

\begin{lemma}\label{lem:mul}
Suppose $X\in \mf{X}(G)$ is a multiplicative Euler-like vector field for $(G,G|_N)$. Then the tubular neighborhood embedding $\varphi\colon \nu(G,G|_N)\supset O\to G$ defined by $X$ is a groupoid isomorphism onto its image.
\end{lemma}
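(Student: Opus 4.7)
The plan is to apply the functoriality statement of Theorem \ref{th:euler} to each of the groupoid structure maps of $G$. Multiplicativity of $X$ packages exactly the data needed to invoke functoriality, namely that $X$ (and its restriction $X_M := X|_M \in \mf{X}(M)$) is related to itself along source, target, unit, and multiplication.

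First I would verify that $X_M$ is Euler-like for $(M,N)$. Applying the normal bundle functor to $X \sim_\sz X_M$ gives $\nu(X_M) \sim_{\nu(\sz)} \nu(X) = \E_{\nu(G,G|_N)}$, and since $\nu(\sz)$ is a submersion of vector bundles intertwining scalar multiplications, it carries the ambient Euler vector field to the Euler vector field of the base, forcing $\nu(X_M) = \E_{\nu(M,N)}$. Let $\psi \colon \nu(M,N) \supset O' \to M$ be the tubular neighborhood embedding produced by $X_M$. Functoriality applied to the morphisms $\sz, \tz \colon (G, G|_N) \to (M,N)$ and to the unit section $u \colon (M,N) \to (G, G|_N)$ (note $X_M \sim_u X$ because $X|_M = X_M$) immediately yields
\[ \sz \circ \varphi = \psi \circ \nu(\sz), \qquad \tz \circ \varphi = \psi \circ \nu(\tz), \qquad \varphi \circ \nu(u) = u \circ \psi, \]
so $\varphi$ intertwines source, target and unit maps with those of the tangent groupoid $\nu(G, G|_N) \rra \nu(M,N)$.

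For the multiplication, consider $(X,X) \in \mf{X}(G \times G)$, which is Euler-like for $(G \times G, G|_N \times G|_N)$; by uniqueness in Theorem \ref{th:euler}, the associated tubular neighborhood embedding is simply $\varphi \times \varphi$. Multiplicativity of $X$ is the statement that $(X,X)$ is tangent to the submanifold $G^{(2)}$ of composable arrows, and that its restriction $X^{(2)}$ is $\Mult_G$-related to $X$. Since $\nu(G^{(2)}, G|_N^{(2)})$ sits inside $\nu(G \times G, G|_N\times G|_N)$ as a submanifold invariant under the scalar multiplication on the ambient normal bundle, the restriction of $(\E,\E)$ to it is the Euler vector field of $\nu(G^{(2)}, G|_N^{(2)})$; hence $X^{(2)}$ is Euler-like. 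Functoriality applied to the inclusion $G^{(2)} \hra G \times G$ then identifies the tubular neighborhood embedding $\varphi^{(2)}$ of $X^{(2)}$ with the restriction of $\varphi \times \varphi$. A final application of functoriality, this time to $\Mult_G \colon (G^{(2)}, G|_N^{(2)}) \to (G, G|_N)$ with $\Mult_G$-related pair $(X^{(2)}, X)$, gives
\[ \varphi \circ \nu(\Mult_G) = \Mult_G \circ \varphi^{(2)}, \]
which is precisely the multiplicativity of $\varphi$. Compatibility with inversion follows automatically once units and multiplication are respected (and may also be obtained directly from functoriality, since multiplicativity of $X$ forces $X$ to be inversion-related to itself). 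Thus $\varphi$ is a groupoid morphism, and being a diffeomorphism onto its open image, a groupoid isomorphism onto its image.

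The main obstacle is the multiplication step: one must check that the inclusion $\nu(G^{(2)}, G|_N^{(2)}) \hra \nu(G \times G, G|_N\times G|_N)|_{G|_N^{(2)}}$ behaves correctly with respect to the Euler vector fields, so that $X^{(2)}$ is genuinely Euler-like and $\varphi^{(2)}$ coincides with the restriction of $\varphi \times \varphi$ — this requires some care with the domains $O^{(2)}$ sitting inside $O \times O$. Once this bookkeeping is in place, the remainder is a mechanical application of functoriality.
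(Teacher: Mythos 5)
Your argument is correct and follows essentially the same route as the paper: restrict $(X,X)$ to $G^{(2)}$, observe the restriction $X^{(2)}$ is Euler-like, and apply the functoriality clause of Theorem \ref{th:euler} to $\Mult_G$, using that under the canonical identification $\nu(G^{(2)},(G|_N)^{(2)})\cong \nu(G,G|_N)^{(2)}$ the map $\nu(\Mult_G)$ is the multiplication of the normal bundle groupoid, so that the commuting square is exactly multiplicativity of $\varphi$. Your additional checks for source, target and units via $X_M=X|_G{}_{|M}$, and the identification of $\varphi^{(2)}$ with the restriction of $\varphi\times\varphi$ by uniqueness, are harmless elaborations of bookkeeping the paper leaves implicit (it simply takes $O^{(2)}=O^2\cap \nu(G,G|_N)^{(2)}$ as the domain).
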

\begin{proof}
The vector field $(X,X)$ on $G^2$ is Euler-like for $(G^2,(G|_N)^2)$, hence its restriction $X^{(2)}$ to $G^{(2)}$ is Euler-like 
for $(G^{(2)},(G|_N)^{(2)})$, defining a tubular neighborhood embedding $\varphi^{(2)}\colon O^{(2)}\to G^{(2)}$ where 
$O^{(2)}=O^2\cap G^{(2)}$. Since $X^{(2)}\sim_{\Mult_G} X$, it follows by the functoriality statement of Theorem \ref{th:euler} 
that $\Mult_G(O^{(2)})\subset O$, and the diagram 
\[ \xymatrix@C=13ex{ { \nu(G^{(2)},(G|_N)^{(2)})} \supset O^{(2)}  \ar[r]_{\varphi^{(2)}} \ar[d]_{\nu(\Mult_G)} & G^{(2)}\ar[d]^{\Mult_G}\\ \nu(G,G|_N)\supset O\ar[r]_{\varphi} & G}\]  
commutes. 
 
By definition of the groupoid structure on $\nu(G,G|_N)$, the identification of 
 $\nu(G^{(2)},(G|_N)^{(2)})$ with $\nu(G,G|_N)^{(2)}$, intertwines $\nu(\Mult_G)$ with 
$\Mult_{\nu(G,G|_N)}$. Hence, the diagram tells us that $O\subset \nu(G,G|_N)$ is a subgroupoid, with $\varphi$ a   
groupoid morphism. 
\end{proof}

In general, Lie groupoids are not linearizable. Consider however the case that  the Lie groupoid $G$ is  \emph{proper}, i.e., the map 
\[ (\tz,\sz)\colon G\to \on{Pair}(M)=M\times M\] 
is a proper map. Proper Lie groupoids may be seen as counterparts to compact Lie groups; examples  include the action Lie groupoids for proper Lie group actions, or the pair groupoids for surjective submersions. See \cite{cra:dif,hoy:lie} for further information on proper Lie groupoids.  In \cite{wei:lin}, Weinstein conjectured that proper Lie groupoids are linearizable at their critical points; this conjecture and its generalization to linearization along orbits
was proved by Zung in \cite{zun:pro}. (The orbits $\O\subset M$ of a proper Lie groupoid are invariant closed submanifolds of $M$.) Simpler arguments (and improved statements) were given later by Crainic-Struchiner \cite{cra:lin} and Fernandes-del Hoyo \cite{fer:rie}. 
We will explain how the proof in \cite{cra:lin} can be further streamlined using Euler-like vector fields. A key tool for proper Lie groupoids is the existence of `Haar densities'. A \emph{left-invariant density} $\m$ on a Lie groupoid $G$ is a left-invariant section of the density bundle of $\ker(T\tz)\subset TG$; under left trivialization it is  
identified with a section of the density bundle of $A$. The following fact is due to J.-L. Tu \cite{tu:con} and Crainic \cite[Section 2.1]{cra:dif}:
\begin{lemma}\label{lem:leftinv}
	Proper Lie groupoids $G\rra M$ admit left-invariant 
	densities $\m$ such that  
	\begin{itemize} 
		\item the restriction of $\tz\colon G\to M$ to the support of $\m$ 
		is a proper map,
		\item the integral of $\m$ over every $\tz$-fiber is equal to $1$.	
	\end{itemize}
\end{lemma}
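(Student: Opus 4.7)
The strategy is to produce $\mu$ in the form $\mu = (c \circ \sz)\mu_0$, where $\mu_0$ is a fixed positive left-invariant reference density on $G$ and $c \in C^\infty(M)$ is a suitable cutoff function. To start, I would fix any everywhere positive section of the density bundle of $A$; via the left-trivialization $\ker(T\tz) \cong \sz^*A$, this produces a positive left-invariant density $\mu_0$ on $G$. Because $\sz(gh) = \sz(h)$ for composable $g,h$, pullbacks $c \circ \sz$ of functions from $M$ are automatically left-invariant on $G$, so $(c \circ \sz)\mu_0$ remains left-invariant for any $c \in C^\infty(M)$.

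The crucial input is a cutoff function $c \in C^\infty(M)$ with $c \geq 0$, satisfying (i) $\{c > 0\}$ meets every orbit of $G$, and (ii) $\on{supp}(c) \cap \tz(\sz^{-1}(K))$ is compact for every compact $K \subset M$. This is Tu's cutoff function theorem \cite{tu:con}, which uses paracompactness of $M$ together with properness of $(\tz, \sz)$ to combine a locally finite partition of unity with the orbit structure of $G$. Property (ii) makes the tentative density $\tilde\mu := (c \circ \sz)\mu_0$ have $\tz$-proper support: for any compact $K \subset M$,
\[
\on{supp}(\tilde\mu) \cap \tz^{-1}(K) \subseteq (\tz,\sz)^{-1}\bigl(K \times (\on{supp}(c) \cap \tz(\sz^{-1}(K)))\bigr),
\]
which is the preimage of a compact set under the proper map $(\tz,\sz)$ and hence compact.

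It remains to normalize. The function $f(y) := \int_{\tz^{-1}(y)} \tilde\mu$ is smooth and nonnegative on $M$, and strictly positive by (i). It is constant along $G$-orbits: for $g \in G$, left-translation $L_g: \tz^{-1}(\sz(g)) \to \tz^{-1}(\tz(g))$ is a diffeomorphism preserving $\tilde\mu$, so a change of variables gives $f(\tz(g)) = f(\sz(g))$. Setting
\[
\mu := \bigl((c/f) \circ \sz\bigr)\mu_0
\]
preserves left-invariance and the $\tz$-properness of the support, and since $f \circ \sz = f \circ \tz$ on $G$ (by orbit-constancy of $f$), the fiber integral evaluates to $\int_{\tz^{-1}(y)} \mu = f(y)/f(y) = 1$.

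The main obstacle is Tu's cutoff function theorem itself: one needs $c$ whose support is large enough to meet every orbit (so $f > 0$) yet whose intersection with each orbit-saturation of a compact set is compact (so $\tilde\mu$ has $\tz$-proper support). This is a genuine interaction between paracompactness of $M$, properness of $(\tz,\sz)$, and the orbit structure; once it is in hand, the remaining steps are routine bookkeeping around left-invariance and the change-of-variables formula.
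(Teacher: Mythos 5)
The paper offers no proof of this lemma at all --- it simply cites Tu and Crainic --- and your argument is precisely the standard one from those references: pull back a positive density on $A$ via left-trivialization to a positive left-invariant density along the $\tz$-fibers, multiply by $c\circ\sz$ for a cutoff $c$ on $M$, and normalize by the fiber integral $f$, which is smooth (by $\tz$-properness of the support), strictly positive, and orbit-constant by left-invariance. All the bookkeeping checks out (left-invariance of $c\circ\sz$ since $\sz(gh)=\sz(h)$, compactness of $\on{supp}(\tilde\mu)\cap\tz^{-1}(K)$ via properness of $(\tz,\sz)$, and the normalization using $f(\sz(g))=f(\tz(g))$), and the one genuinely nontrivial ingredient --- existence of the cutoff function --- is correctly isolated and attributed to Tu, which is exactly the point the paper itself delegates to the literature.
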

These `Haar densities' are used for averaging arguments. In particular, as shown in \cite{cra:lin}, they may be 
used to obtain multiplicative vector fields: 
\begin{lemma}[Crainic-Struchiner \cite{cra:lin}] Let $G\rra M$ be a proper Lie groupoid. Then any 
	left-invariant density $\m$ as in Lemma \ref{lem:leftinv} determines a projection from the space of $\sz$-projectable vector fields 
on $G$ to the space multiplicative vector fields, given by $X\mapsto  \ol{X}$ with
\[ 	 \ol{X}_g=\int_{a\in \tz^{-1}(\sz(g))} X_{ga}\circ (X_a)^{-1} \m(a)\]	
(using groupoid multiplication in $TG\rra TM$). The averaged vector field satisfies  $\ol{X}\sim_{\sz} \ol{Y}$ where 
\[ \ol{Y}_m=\int_{a\in \tz^{-1}(m)} \tz(X_a)\ \m(a).\]
\end{lemma}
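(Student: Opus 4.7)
The plan is to verify in turn that the formula (i) defines a smooth vector field on $G$, (ii) satisfies $\ol{X}=X$ when $X$ is already multiplicative (so that $X\mapsto \ol{X}$ is a projection), (iii) satisfies $\ol{X}\sim_\sz \ol{Y}$, and (iv) is multiplicative. Parts (i)--(iii) should go quickly. Well-definedness requires that the integrand $X_{ga}\circ (X_a)^{-1}$ make sense in $TG\rra TM$: using $X\sim_\sz Y$, the source of $X_{ga}$ equals $Y_{\sz(a)}$, matching the target of $(X_a)^{-1}$, and the resulting vector lies in $T_g G$ since $g\cdot a\cdot a^{-1}=g$. Smoothness in $g$ and compactness of the $a$-domain follow from the properties of $\m$ in Lemma \ref{lem:leftinv}. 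For the projection property, multiplicativity of $X$ gives $X_{ga}\circ X_a^{-1}=X_g$, and the Haar normalization $\int \m=1$ on each $\tz$-fiber yields $\ol{X}_g=X_g$. For $\sz$-projectability, I would pull $T\sz$ under the integral (it is linear on each tangent space) and compute $T\sz(X_{ga}\circ X_a^{-1})=T\tz(X_a)$, so $T\sz(\ol{X}_g)=\int T\tz(X_a)\,\m(a)=\ol{Y}_{\sz(g)}$.

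The substantial step is (iv), and here lies the main conceptual obstacle: pulling composition through a product of the two integrals naively --- say as $\iint (X_{g_1 b}\circ X_b^{-1})\circ (X_{g_2 a}\circ X_a^{-1})\,\m(b)\,\m(a)$ --- is illegal, since the composability constraint $T\sz=T\tz$ between the two integrands fails pointwise as $(b,a)$ varies independently. The remedy is to use left-invariance of $\m$ to change variables $b=g_2 a$ in the formula for $\ol{X}_{g_2}$, so that both $\ol{X}_{g_1}$ and $\ol{X}_{g_2}$ become integrals over the \emph{same} variable $b\in \tz^{-1}(\sz(g_1))$, with integrands $u_b=X_{g_1 b}\circ X_b^{-1}\in T_{g_1}G$ and $w_b=X_b\circ X_{g_2^{-1}b}^{-1}\in T_{g_2}G$ respectively. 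A short check using $X\sim_\sz Y$ shows $T\sz(u_b)=T\tz(w_b)=T\tz(X_b)$, so $(u_b,w_b)$ now lies pointwise in the linear subspace $T_{(g_1,g_2)}G^{(2)}\subset T_{g_1}G\oplus T_{g_2}G$ on which $T\Mult_G$ is linear. Averaging therefore commutes with composition:
\[ \ol{X}_{g_1}\circ \ol{X}_{g_2} \,=\, \int_b T\Mult_G(u_b,w_b)\,\m(b) \,=\, \int_b u_b\circ w_b\,\m(b),\]
and the integrand telescopes by associativity: $u_b\circ w_b = X_{g_1 b}\circ (X_b^{-1}\circ X_b)\circ X_{g_2^{-1}b}^{-1} = X_{g_1 b}\circ X_{g_2^{-1}b}^{-1}$. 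A second application of left-invariance, substituting $b=g_2 c$ in the definition of $\ol{X}_{g_1g_2}$, produces precisely the same expression $\int_b X_{g_1 b}\circ X_{g_2^{-1}b}^{-1}\,\m(b)$, completing the multiplicativity identity.

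In short, the only nontrivial move is the left-invariance substitution that synchronizes two independent integrals into a single diagonal one whose integrand lies fiberwise in $TG^{(2)}$; once pointwise composability is in force under the integral sign, the linearity of $T\Mult_G$ on $TG^{(2)}$ finishes the job. The remaining assertions ($\sz$-projection, smoothness, projection property) are then direct consequences of the linearity of $T\sz$ under the integral and the left-invariance and normalization of the Haar density.
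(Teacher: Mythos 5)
Your argument is correct. Note that the paper itself gives no proof of this lemma --- it is quoted from Crainic--Struchiner \cite{cra:lin}, with only the remark that $\sz$-projectability makes the integrand $X_{ga}\circ (X_a)^{-1}$ well defined --- so what you have written is essentially a reconstruction of the argument in \cite{cra:lin}. You identify the genuinely delicate point correctly: one cannot compose the two averages as a double integral, since composability in $TG\rra TM$ fails for independent $(a,b)$; the left-invariance of $\m$ is exactly what allows the substitution $a=g_2^{-1}b$ that rewrites $\ol{X}_{g_1}$ and $\ol{X}_{g_2}$ as integrals over the same fiber $\tz^{-1}(\sz(g_1))$, with integrands $(u_b,w_b)$ lying pointwise in the linear subspace $T_{(g_1,g_2)}G^{(2)}$, after which linearity of $T\Mult_G$ on that subspace lets the composition pass under the integral and the telescoping $u_b\circ w_b=X_{g_1b}\circ X_{g_2^{-1}b}^{-1}$ together with one more left-invariant substitution gives $\ol{X}_{g_1}\circ\ol{X}_{g_2}=\ol{X}_{g_1g_2}$. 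As a byproduct this also establishes composability of $\ol{X}_{g_1}$ and $\ol{X}_{g_2}$ (equivalently, $T\tz(\ol{X}_{g_2})=\ol{Y}_{\tz(g_2)}$), which is worth stating explicitly; the remaining points (compact support of the integrand via properness of $\tz$ on $\on{supp}\m$, the normalization giving $\ol{X}=X$ for multiplicative $X$, and $T\sz$ passing under the integral) are handled adequately.
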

In this lemma, the condition that  $X$ is \emph{$\sz$-projectable} (i.e.,  $X\sim_\sz Y$ for some vector field $Y\in\mf{X}(M)$) ensures that the groupoid product $X_{ga}\circ (X_a)^{-1} $ is defined.

\begin{theorem} \label{th:grlin}
	If $G\rra M$ is a proper Lie groupoid, and $N\subset M$ is a closed invariant submanifold, then $G$ is linearizable along $N$. 	
\end{theorem}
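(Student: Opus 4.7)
The plan is to construct a multiplicative, Euler-like vector field $\ol{X}$ for the pair $(G,G|_N)$, and then invoke Lemma \ref{lem:mul}: the tubular neighborhood embedding it produces will automatically be a groupoid isomorphism onto an open subgroupoid containing $G|_N$, giving the desired linearization. I would start by choosing a tubular neighborhood of $N$ in $M$, with associated Euler-like vector field $Y\in\mf{X}(M)$. Using an Ehresmann connection for the submersion $\sz\colon G\to M$ adapted to $G|_N$ (so that horizontal lifts of vectors in $TN$ remain tangent to $G|_N$), one extends this to a compatible tubular neighborhood embedding $\nu(G,G|_N)\supset O_G\to G$ intertwined with the one on $M$ by $\sz$ and $\nu(\sz)$. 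The associated Euler-like vector field $X\in\mf{X}(G)$ for $(G,G|_N)$ is then $\sz$-projectable, $X\sim_\sz Y$, and hence lies within the domain of the Crainic--Struchiner averaging operator.

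Next, apply the Crainic--Struchiner projection to $X$, using a Haar density $\m$ as in Lemma \ref{lem:leftinv}, producing a multiplicative vector field $\ol{X}$ on a neighborhood of $G|_N$ (well-defined thanks to the $\tz$-properness of $\mathrm{supp}(\m)$). The key check is that $\ol{X}$ remains Euler-like. Vanishing along $G|_N$ is immediate: by invariance of $N$, the $\tz$-fibre $\tz^{-1}(\sz(g))$ is contained in $G|_N$ whenever $g\in G|_N$, so both $X_{ga}$ and $X_a$ are zero vectors and the integrand $X_{ga}\circ X_a^{-1}$ reduces to $0_g$. For the linear approximation along $G|_N$, one uses that the normal bundle functor carries the Crainic--Struchiner averaging operator on $G$ to the corresponding averaging operator on the VB-groupoid $\nu(G,G|_N)\rra \nu(M,N)$, equipped with the induced Haar density on the subgroupoid $\nu(G|_N,N)\to N$. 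Therefore $\nu(\ol{X})$ equals the average of $\nu(X)=\E_{\nu(G,G|_N)}$; but the Euler vector field of a VB-groupoid is already multiplicative, since scalar multiplication $\kappa_t$ is a groupoid automorphism, so averaging leaves it fixed. Thus $\nu(\ol{X})=\E_{\nu(G,G|_N)}$, and $\ol{X}$ is Euler-like.

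With $\ol{X}$ at hand, Lemma \ref{lem:mul} delivers the required linearization and the proof is complete. The main technical obstacle is the functoriality assertion used in the previous paragraph, namely that the averaging operator commutes with the normal bundle functor. I would verify this either by a direct coordinate computation of jets along $G|_N$ (observing that $\ol{X}_g$ depends on $X$ only through its restriction to the $\tz$-fibre over $\sz(g)$, and that normal linearization of such a fibre integral equals the fibre integral of the normal linearization), or more conceptually by carrying out both the averaging and the linearization procedures inside the Haj--Higson deformation space, where the scalar dilations $\kappa_t$ and the groupoid multiplication interact in a transparent way. A secondary, routine issue is arranging in Step 1 that the initial tubular neighborhoods on $M$ and $G$ are genuinely compatible, which is handled by the adapted Ehresmann connection.
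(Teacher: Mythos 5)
Your overall route is the same as the paper's: reduce via Lemma \ref{lem:mul} to producing a multiplicative Euler-like vector field, lift an Euler-like $Y$ on $(M,N)$ through the submersion $\sz$ to an Euler-like $X$ on $(G,G|_N)$ (your horizontal lift, which vanishes along $G|_N$, is exactly the right choice; the auxiliary compatible tubular neighborhood embedding and the ``adapted'' connection are not needed), and then apply the Crainic--Struchiner averaging with a Haar density $\m$. The difference lies in how you check that $\ol{X}$ stays Euler-like: you want averaging to commute with the normal bundle functor, so that $\nu(\ol{X})$ is the average of $\nu(X)=\E$ over the VB-groupoid $\nu(G,G|_N)\rra\nu(M,N)$ (with the density $\m|_{G|_N}$ transported via the fiberwise isomorphism $\nu(\tz)$, which exists since $G|_N=\tz^{-1}(N)$), and $\E$, being multiplicative, is fixed. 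This is true and provable by your differentiation-under-the-integral sketch, but it is precisely the step you leave open. The paper sidesteps it: using the characterization \eqref{eq:almostlinear} and $\ol{X}\sim_\sz\ol{Y}$ (plus $\ol{X}|_{G|_N}=0$, which you do verify), it suffices to note that $(\L_{\ol{Y}}f-f)(m)=\int_{\tz^{-1}(m)}\big(\L_X\tz^*f-\tz^*f\big)\,\m$ vanishes to second order on $N$ because the integrand does so on $G|_N$. So your approach buys a conceptually pleasant ``averaging commutes with linearization'' statement at the cost of a jet-level verification, while the paper's function-level computation closes the argument in two lines.
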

\begin{proof}
By Lemma  \ref{lem:mul}, it suffices to construct a multiplicative Euler-like vector field for $(G,G|_N)$. We begin by choosing 
an Euler-like vector field $Y$ for $(M,N)$. Since $\sz$ is a submersion, there exists  $X\in\mf{X}(G)$ with $X\sim_{\sz} Y$. Since $\sz$ induces a fiberwise isomorphism of normal bundles, 
the vector field $X$ is Euler-like for $(G,G|_N)$. Let $\ol{X}$ be the multiplicative vector field obtained by taking the average with respect to a Haar density $\m$. We claim that the vector field $\ol{X}$ is again Euler-like. Since $\ol{X}\sim_\sz \ol{Y}$, it suffices to show that $\ol{Y}$ is Euler-like. We shall use the characterization \eqref{eq:almostlinear}. Let $f\in C^\infty(M)$ 
with $f|_N=0$. Since $X$ is Euler-like, the difference $\L_X (\tz^*f)-\tz^* f$ vanishes to second order on $G|_N\subset G$. But then its integral over $\tz$-fibers 
\[ m\mapsto (\L_{\ol{Y}}f-f)(m)=\int_{a\in\tz^{-1}(m)} \big(\L_X (\tz^* f)-\tz^* f\big)\Big|_a\ \m(a)\]
vanishes to second order on $N$, as required.  
\end{proof}
\vskip.3in
\subsection{Linearization of proper symplectic groupoids}
It is possible to combine the Euler-like arguments for the linearization of proper Lie groupoids (Section \ref{subsec:proper}) and for Weinstein's Lagrangian neighborhood theorem (Section \ref{subsec:weinstein}). Recall that a differential 
 form $\alpha\in \Omega(G)$ on a groupoid $G\rra M$ is \emph{multiplicative} if  $\Mult_G^*\alpha=\pr_1^*\alpha+\pr_2^*\alpha$ where $\Mult_G\colon G^{(2)}\to G$ is the groupoid multiplication and $\pr_1,\pr_2$ are the projections to the two factors of 
$G^{(2)}\subset G^2$. A \emph{symplectic groupoid} is a groupoid with a 
multiplicative symplectic form $\omega\in \Omega^2(G)$. 

Given an invariant closed submanifold $N\subset M$, the restriction $G|_N$ is a coisotropic submanifold; it is Lagrangian if and only if $\dim N=0$. Thus suppose $N=\{p\}$ is a critical point for $G$. In this case, $H=G|_{\{p\}}$ (as a groupoid over a point) is a Lie group. The normal bundle inherits the structure of a Lie groupoid over $\nu(G,H)\rra \nu(M,\pt)=T_pM$, equipped with a linearized symplectic form $\omega_{[1]}\in \Omega^2(\nu(G,H))$, which is again multiplicative. 

Suppose that the symplectic groupoid $G\rra M$ is furthermore \emph{proper}, so that $H$ is, in particular, compact. 
We claim that $\omega$ admits a multiplicative primitive $\alpha\in \Omega^1(G)$ (defined on some neighborhood of $H$ in 
$G$), with $\alpha|_N=0$.  Indeed, by Theorem \ref{th:grlin} there exists a  tubular neighborhood embedding 
\[ \nu(G,H)\supset O_1\stackrel{\varphi_1}{\lra} G\] that is compatible with the groupoid structures. Thus $\varphi_1^*\omega$ is multiplicative, and pulls back to $0$ on $H$. By applying the standard homotopy operator  for the retraction of $O_1$ onto $N$, we obtain a primitive for $\varphi_1^*\omega$, which is furthermore multiplicative. 
Write this primitive  in the form $\varphi_1^*\alpha$ (near $N$), and let $X$ be defined by $\iota_X\omega=\alpha$  (as in Section \ref{subsec:weinstein}). Then $X$ is Euler-like, with $\L_X\omega=\omega$ near $H$, and is also  multiplicative (since both $\omega$ and $\alpha$ are multiplicative). Hence, the tubular neighborhood embedding 
 \[ \nu(G,H)\supset O\stackrel{\varphi}{\lra} G\] 
defined by $X$ is an isomorphism of symplectic groupoids. The canonical isomorphism $\nu(G,H)\cong T^*H$ from Remark \ref{rem:canonical} is an isomorphism of symplectic groupoids. (See \cite[Proposition 11.14]{me:intropoi}.) We hence recover Zung's result \cite[Theorem 2.5]{zun:pro} that a proper symplectic groupoid $G\rra M$ is isomorphic, near any critical point $p\in M$, to a cotangent groupoid $T^*H\rra \h^*$.

\section{The weighted setting}
In this section we indicate a generalization of Euler-like vector fields for pairs $(M,N)$ to include \emph{weightings}. Further details will be given in a 
forthcoming work with Y.~Loizides \cite{meloi:prep}. For simplicity, we will restrict ourselves to
the degree 2 case (the filtration on functions by `weighted degree of vanishing' is determined by its components up to filtration degree $2$); the general case is slightly more involved and will be discussed in  \cite{meloi:prep}.

\subsection{The filtration on functions}\label{subsec:filtration}
Suppose $N\subset M$ is a closed submanifold. 
Denote by  $\A=C^\infty_M$ the sheaf of smooth functions on $M$, and by 
$\I\subset \A$ the vanishing ideal (ideal sheaf) of $N$. Then $\I^k$ is the sheaf of functions vanishing to order $k$ on $N$. Given a subbundle $F\subset \nu(M,N)$, we may define a smaller ideal $\J\subset \I$, where $\J(U)$ consists of 
functions $f\in C^\infty(U)$ satisfying $f|_U=0,\ \d f|_{TU\cap \ti{F}}=0$. (Here $\ti{F}\subset TM|_N$ is the pre-image 
of $F$, and the differential $\d f$ is thought of as a function on the tangent bundle.)  Note $\I^2\subset \J$. The ideals $\I,\J$ generate a decreasing filtration 
of the sheaf of algebras algebras $\A=C^\infty_M$, 
\[ C^\infty_M=\A_{(0)}\supset \A_{(1)}\supset \cdots \]
with $\A_{(1)}=\I,\ \A_{(2)}=\J$, and with $\A_{(k)}(U)$ for $k>1$ spanned by products of functions $f_1\cdots f_p\,g_1\cdots g_q$ with $f_i\in \I,\ g_j\in \J$, and $p+2q\ge k$. Equivalently, for $\ell\in\N$, 
\[ \A_{(2\ell-1)}=\I\J^\ell,\ \A_{(2\ell)}=\J^\ell.\]
By construction, $\A_{(k)}\A_{(k')}\subset \A_{(k+k')}$ for all $k,k'$.

\begin{remark}\label{rem:feeling}
To get a better feeling for this filtration, choose local coordinates $x_a,y_b,z_c$ 
on an open neighborhood $U$ of any given point $p\in N$, in such a way that $U\cap N$ is defined by the vanishing of the coordinates $y_b,z_c$, and 
$\ti{F}\cap TU$ is defined by the vanishing of $y_b,z_c$, together with the vanishing of the differentials $\d z_c$. Then $\I(U)$ is the ideal generated by the functions $y_b,\,z_c$, while $\J(U)$ is generated by the functions $y_{b_1}y_{b_2}$ and $z_c$. Consequently, 
\[ \A_{(k)}(U)=\langle y_{b_1}\cdots y_{b_p}z_{c_1}\cdots z_{c_q}|\ p+2q= k\rangle\]
(where $\langle\cdots\rangle$ denotes the ideal in $\A(U)$ generated by this subset).  
This amounts to assigning weight $2$ 
to the coordinates $z_c$, weight $1$ to the coordinates $y_b$, and weight $0$ to the coordinates $x_a$. 
\end{remark}
The filtration of the sheaf of algebras $\A$ gives an associated graded sheaf of algebras,  
\[ \on{gr}(\A)=\bigoplus_{k=0}^\infty \on{gr}(\A)_k,\ \ \ \ 
\on{gr}(\A)_k= \A_{(k)}/\A_{(k+1)}\] 
By construction, it is generated by its components of degree $\le 2$. 
Note that $\on{gr}(\A)$ is supported on $N$, since $\A_{(k)}(U)=\A$ for $U\cap N=\emptyset$, and hence may be regarded as a sheaf on $N$. 
\begin{lemma}
The sheaf $\on{gr}(\A)$ is the sheaf of sections of  a bundle of graded algebras $\mathsf{A}\to N$. 
\end{lemma}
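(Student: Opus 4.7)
The strategy is to work in the adapted local coordinates of Remark~\ref{rem:feeling}, exhibit $\on{gr}(\A)$ as a locally free sheaf of graded $C^\infty_N$-algebras, and glue the local trivializations to a bundle of graded algebras via smooth transition maps.

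Fix $p \in N$ and adapted coordinates $(x_a, y_b, z_c)$ on a neighborhood $U$, with weights $\mathsf{w}(y_b)=1$ and $\mathsf{w}(z_c)=2$. The key local claim is that for each $k \geq 0$, the restriction $\on{gr}(\A)_k|_{U\cap N}$ is a free $C^\infty_N|_{U\cap N}$-module with basis the weighted monomials $\{y^P z^Q : |P|+2|Q|=k\}$. The inclusion $\A_{(k)} \subseteq \{f : \text{weighted Taylor order of } f \text{ along } N \text{ is} \geq k\}$ follows from the multiplicativity $\A_{(k)}\A_{(k')} \subseteq \A_{(k+k')}$ combined with the fact that the generators $y_b\in \I$ have weighted order $1$ and that $y_by_{b'}, z_c \in \J$ have weighted order $2$. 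Given this, Taylor's formula applied to $f \in \A_{(k)}(U)$ to unweighted order $k$ in $(y,z)$ gives
\[ f = \sum_{|P|+2|Q|=k} c_{PQ}(x)\, y^P z^Q + r, \]
with $c_{PQ} \in C^\infty(U\cap N)$ and $r \in \A_{(k+1)}(U)$: terms in the Taylor polynomial of unweighted degree $\leq k$ but weighted degree $>k$ already lie in $\A_{(k+1)}$ (being products of generators whose total weighted order exceeds $k$), and the Hadamard remainder lies in $\I^{k+1} \subseteq \A_{(k+1)}$. Freeness follows at once: a relation $\sum c_{PQ}(x)\, y^P z^Q \in \A_{(k+1)}(U)$ has vanishing weighted-Taylor-degree-$k$ part, which forces each $c_{PQ}\equiv 0$ on $U\cap N$.

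For the gluing, let $(x,y,z)$ and $(x',y',z')$ be two adapted coordinate systems around a common point of $N$. Because the filtration $\A_{(\bullet)}$ depends only on the intrinsic data $(N, F)$, the coordinate change preserves it and induces an isomorphism of graded $\R$-algebras on $\on{gr}(\A)$. The induced change-of-basis matrices on each $\on{gr}(\A)_k$ are read off from the weighted-degree-$k$ part of the Taylor expansion of $(y')^{P'}(z')^{Q'}$ in the coordinates $(y,z)$, and are polynomials in the partial derivatives of the coordinate change restricted to $N$; in particular they depend smoothly on $x$. The local trivializations therefore glue via smooth cocycles into a graded vector bundle $\mathsf{A} \to N$, with fibrewise algebra structure inherited from pointwise multiplication in $\A$.

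The main obstacle is the weighted bookkeeping in the first step: the ideals $\I$ and $\J$ are not generated by elements of a single weighted degree (both contain weight-$2$ generators, while $\I$ also contains the weight-$1$ generators $y_b$), so verifying that every product of generators of $\A_{(k)}$ really has the advertised weighted order $\geq k$ requires some care in tracking contributions from each generator. Once this forward inclusion is in place, the Taylor expansion, the uniqueness of coefficients, and the smoothness of the gluing cocycle all reduce to routine manipulations.
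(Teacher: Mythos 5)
Your proof is correct and follows essentially the same route as the paper: the paper's argument is simply the observation that in the adapted coordinates of Remark~\ref{rem:feeling}, $\on{gr}(\A)_k(U)$ is the free $C^\infty(U\cap N)$-span of the weighted monomials $y^Pz^Q$ with $|P|+2|Q|=k$, which is exactly your local claim. You merely spell out what the paper declares ``clear'' -- the weighted Taylor expansion giving local freeness and the smooth coordinate-change cocycles giving the bundle structure -- and these details are all in order.
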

\begin{proof}
This is clear in the local coordinates from Remark \ref{rem:feeling}: Here $\on{gr}(\A)_k(U)$ is the 
$C^\infty(U\cap N)$-span of monomials  $y_{b_1}\cdots y_{b_p}z_{c_1}\cdots z_{c_q}$ with $p+2q=k$.
Put differently, $\on{gr}(\A)(U)$ is generated (as an algebra over $C^\infty(U\cap N)$) by the coordinate functions $y_b$ 
(of degree $1$) and $z_c$ (of degree $2$).  
\end{proof}

\begin{proposition}\label{prop:a}
	There are canonical isomorphisms
	\[ \mathsf{A}_0=
	N,\ \ \ \mathsf{A}_1=F^*,\] 
	and an exact sequence 
	\begin{equation}\label{eq:exact} 0\to \on{Sym}^2(F^*)\to \mathsf{A}_2\to  \on{ann}(F)\to 0.\end{equation}
\end{proposition}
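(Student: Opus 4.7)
My plan is to treat the three identifications one at a time, using the local coordinates of Remark \ref{rem:feeling} only to check surjectivity and injectivity, and otherwise relying on canonical maps built from restriction and the differential.

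First I would dispose of $\mathsf{A}_0$ and $\mathsf{A}_1$. By definition $\mathsf{A}_0=\A/\I$ is the sheaf of smooth functions on $N$ (so the underlying bundle is canonically $N$ itself, viewed as a trivial line bundle). For $\mathsf{A}_1=\I/\J$, the map $f\mapsto df|_N$ sends $\I$ into sections of $\on{ann}(TN)=\nu(M,N)^*$; composing with the restriction along the inclusion $F\hookrightarrow \nu(M,N)$ yields a map $\I\to \Gamma(F^*)$. The kernel is precisely $\J$, by the very definition of $\J$ in Section \ref{subsec:filtration}, and surjectivity is immediate in local coordinates (the functions $y_b$ hit a basis of $F^*$). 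This gives the canonical isomorphism $\mathsf{A}_1\cong F^*$.

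Next I would construct the two maps in the exact sequence. For the surjection $\mathsf{A}_2\twoheadrightarrow\on{ann}(F)$, take $f\in\J$: the 1-form $df|_N$ vanishes on $TN$ (because $f|_N=0$) and on $\ti F$ (by definition of $\J$), so it descends to an element of $(TM|_N/\ti F)^{*}=(\nu(M,N)/F)^{*}=\on{ann}(F)$. This descends further to a well-defined map on $\mathsf{A}_2=\J/\I\J$, since for $f=gh$ with $g\in\I,\ h\in\J$ the Leibniz rule gives $d(gh)|_N=g|_N\, dh|_N+h|_N\, dg|_N=0$. Surjectivity is checked in local coordinates, where the generators $z_c$ hit a basis of $\on{ann}(F)$. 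For the injection $\on{Sym}^2(F^*)\hookrightarrow\mathsf{A}_2$, I would define it by lifting $\alpha,\beta\in F^*=\I/\J$ to $f,g\in\I$ and sending $\alpha\cdot\beta\mapsto fg\bmod \I\J$; this is well-defined because changing $f$ by an element of $\J$ changes $fg$ by an element of $\J\cdot\I=\I\J$, and commutativity ensures the map factors through the symmetric square. Its image visibly lies in the kernel of the map to $\on{ann}(F)$, since $d(fg)|_N=0$.

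It remains to prove exactness in the middle. If a representative $f\in\J$ satisfies $df|_N=0$, then $f\in\I^2$; conversely $\I^2\subseteq\J$ and every element of $\I^2$ has vanishing differential on $N$. Thus the kernel of $\mathsf{A}_2\to\on{ann}(F)$ is exactly the image of $\I^2$ in $\J/\I\J$, that is $\I^2/\I\J$ (using $\I\J\subseteq\I^2$). The comparison with $\on{Sym}^2(F^*)$ is then the statement that the natural surjection $\on{Sym}^2(\I/\J)\to \I^2/\I\J$, $[f]\cdot[g]\mapsto [fg]$, is an isomorphism of $C^\infty_N$-modules. I would verify this last point in the local coordinates of Remark \ref{rem:feeling}: $\I^2$ is spanned modulo higher order by the products $y_{b_1}y_{b_2}$, $y_b z_c$, $z_{c_1}z_{c_2}$, and $\I\J$ already contains the $y_b z_c$ and $z_{c_1}z_{c_2}$, so $\I^2/\I\J$ is freely spanned by the $y_{b_1}y_{b_2}$, matching $\on{Sym}^2(F^*)$.

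The only genuine subtlety is making sure that all constructed maps are globally well-defined and independent of choices (rather than being merely local coordinate statements); but since each one is produced from the differential together with the Leibniz rule, this is automatic. The verifications of exactness and of freeness of the symmetric square reduce to the model computation above.
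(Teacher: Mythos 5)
Your proposal is correct and follows essentially the same route as the paper: you identify $\mathsf{A}_1=\I/\J$ with $F^*$, obtain the surjection $\mathsf{A}_2=\J/\I\J\to\J/\I^2\cong\Gamma(\on{ann}(F))$, and identify its kernel $\I^2/\I\J$ with $\on{Sym}^2(F^*)$ via the multiplication map, checking the nontrivial points in the adapted coordinates of Remark \ref{rem:feeling}. The only difference is cosmetic: you phrase the identifications through $f\mapsto \d f|_N$ and spell out well-definedness and exactness, whereas the paper states them directly as equalities of quotient sheaves.
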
	
\begin{proof}
	The description of $\sA_0$ is clear. The identification of 
	$\sA_1$ with $F^*=\nu(M,N)^*/\on{ann}(F)$ follows from the equality of the sheaves of sections,  
		\[ \on{gr}(\A)_1=\I/\J=(\I/\I^2)\big/(\J/\I^2).\]
	Finally, the quotient map $\on{gr}(\A)_2=\J/\I\J\to  \J/\I^2$ gives a surjection 
	$\sA_2\to \on{ann}(F)$. The kernel 
	$\I^2/\I\J$ is identified with the sheaf of sections of 
	$\on{Sym}^2(F^*)$ through the multiplication map  
	$\I/\J\otimes \I/\J\to \I^2/\I\J$.
\end{proof}
	
\begin{proposition}\label{prop:split}	The choice of a submanifold $\Sigma$ with 	\[ N\subset \Sigma\subset M,\ \ \ F=\nu(\Sigma,N)\subset \nu(M,N)\] determines a splitting of the exact sequence \eqref{eq:exact},  	and gives an isomorphism
of algebra bundles
\[ 	\sA\cong \on{Sym}\big(F^*\oplus \on{ann}(F)\big).\]
with $F^*$ in degree $1$ and $\on{ann}(F)$ in degree $2$. 
\end{proposition}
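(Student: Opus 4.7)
The plan is to use the ideal sheaf $\I_\Sigma$ of functions vanishing on $\Sigma$ to construct the splitting. Since $N\subset \Sigma$, every $f\in \I_\Sigma$ satisfies $f|_N=0$; moreover the hypothesis $F=\nu(\Sigma,N)$ means that $\ti F = T\Sigma|_N$, so $\d f|_{\ti F}=0$ holds automatically. Hence $\I_\Sigma\subset \J$, and the inclusion descends to a homomorphism of sheaves $\sigma\colon \I_\Sigma \to \sA_2=\J/\I\J$. Because $\I_\Sigma\subset \J$ we have $\I\cdot \I_\Sigma\subset \I\J$, so $\sigma$ factors through the quotient $\I_\Sigma/(\I\cdot \I_\Sigma)$.

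Next I would identify $\I_\Sigma/(\I\cdot \I_\Sigma)$ with $\on{ann}(F)$ via the map $[f]\mapsto \d f|_N$, which is intrinsically well-defined into $\nu(M,N)^*|_N$ and takes values in $\on{ann}(F)$ because $f$ vanishes on $\Sigma$. Using local coordinates $(x_a,y_b,z_c)$ adapted to $N\subset \Sigma$ so that $N=\{y=z=0\}$ and $\Sigma=\{z=0\}$, the ideal $\I_\Sigma$ is generated by the $z_c$ while $\I\cdot \I_\Sigma$ is generated by the $y_b z_c$ and $z_c z_{c'}$; the quotient is thus the free $C^\infty(N)$-module on the classes $[z_c]$, matching the basis $\d z_c$ of $\on{ann}(F)$. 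Inverting this identification and composing with $\sigma$ produces the desired map $s\colon \on{ann}(F)\to \sA_2$. To verify that $s$ splits the exact sequence \eqref{eq:exact}, note that the surjection $\sA_2\to \on{ann}(F)=\J/\I^2$ is also $[f]\mapsto \d f|_N$, so the composition with $s$ is the identity.

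For the algebra isomorphism, $s$ together with the identifications $\sA_0\cong C^\infty_N$ and $\sA_1\cong F^*$ from Proposition \ref{prop:a} assembles into an injection of graded bundles $F^*\oplus \on{ann}(F)\hookrightarrow \sA$, with $F^*$ in degree $1$ and $\on{ann}(F)$ in degree $2$. By the universal property of the symmetric algebra this extends uniquely to a graded algebra homomorphism $\on{Sym}(F^*\oplus \on{ann}(F))\to \sA$. To check that it is an isomorphism, I would return to the adapted coordinates: there $\sA$ is the polynomial algebra over $C^\infty(N)$ freely generated by the $y_b$ in degree $1$ and the $z_c$ in degree $2$, which are exactly the images of the generators of the symmetric algebra. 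The main point requiring care is the claim that $\ker\sigma=\I\cdot \I_\Sigma$; equivalently, that any $f\in \I_\Sigma$ with $\d f|_N=0$ already lies in $\I\cdot \I_\Sigma$. In adapted coordinates this is immediate by writing $f=\sum_c g_c(x,y,z)\,z_c$ and observing that $g_c|_N=0$ forces $g_c\in \I$, so that $f\in \I\cdot \I_\Sigma\subset \I\J$.
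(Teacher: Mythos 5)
Your proposal is correct and takes essentially the same route as the paper: both proofs work with the ideal $\I_\Sigma$ of functions vanishing on $\Sigma$, map it into $\sA_2=\J/\I\J$, and conclude with the $\on{Sym}(F^*\oplus\on{ann}(F))\to\sA$ morphism checked to be an isomorphism in adapted coordinates $(x_a,y_b,z_c)$. The only (cosmetic) difference is that you identify $\I_\Sigma/\I\I_\Sigma\cong\on{ann}(F)$ directly via $[f]\mapsto \d f|_N$ together with the kernel computation, whereas the paper packages the same coordinate verification into the identities $\J=\I_\Sigma+\I^2$ and $\I_\Sigma\cap\I^2=\I\I_\Sigma=\I_\Sigma\cap\I\J$.
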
	

\begin{proof} Let $\I_\Sigma\subset \J$ be the ideal of germs of functions $f$ such that $f|_\Sigma$ vanishes near $N\subset \Sigma$. We claim that	\begin{equation}\label{eq:jsigma} \J=\I_\Sigma+\I^2,\ \ \ \ \I_\Sigma\cap \I^2=\I\I_\Sigma=\I_\Sigma\cap \I\J.\end{equation}	To see this, given $p\in N$, choose coordinates $x_a,y_b,z_c$  	so that the coordinate functions $y_b,z_c$ vanish on $N$, and $z_c$ vanishes on $\Sigma$.  Then 
	 $\I(U)$ is generated by the coordinate functions $y_b,z_c$, while $\J(U)$ is generated by $z_c$ and products $y_{b_1}y_{b_2}$. 	The smaller ideal $\I_\Sigma(U)$ is generated by $z_k$, and   each of 	$\I_\Sigma\cap \I^2,\ \I\I_\Sigma,\ \I_\Sigma\cap \I\J $ is generated over $U$  by products $y_j z_k$. By \eqref{eq:jsigma}, the inclusion $\I_\Sigma\hra \J$ descends to  an inclusion	\[ \J/\I^2\cong \I_\Sigma/(\I_\Sigma\cap \I^2)=\I_\Sigma/(\I_\Sigma\cap\I\J)\hra \J/\I\J,\]	which defines the desired splitting $\on{ann}(F)\hra \sA_2$. The inclusions of 	$F^*=\sA_1$ and   $\on{ann}(F)\subset \sA_2$ into $\sA$ extend to a morphism of graded algebra bundles	
	 \begin{equation}\label{eq:algmor} \on{Sym}(F^*\oplus \on{ann}(F))\to \sA.\end{equation}	
	 Since $\sA$ is generated by its components in degree $\le 2$, this map is surjective. Using local coordinates $x_a,y_b,z_c$ as above, we see that it is in fact an isomorphism. 
	 \end{proof}
 \begin{remark}
 	The subbundle $F$ may be regarded as a (very short) filtration of the normal bundle $\nu(M,N)$, with associated graded bundle 
 	\[\on{gr}(\nu(M,N))=F\oplus \nu(M,N)/F.\] Using the identification $(\nu(M,N)/F)^*\cong \on{ann}(F)$, we may thus regard the algebra bundle $\on{Sym}\big(F^*\oplus \on{ann}(F)\big)$ as the bundle of fiberwise polynomial functions on $\on{gr}(\nu(M,N))$. 
 \end{remark}
 
 \begin{remark}The splitting in Proposition \ref{prop:split} only depends on the 2-jet of $\Sigma$ along $N$, i.e., on the restriction $T_2\Sigma|_N\subset T_2M=J^2_0(\R,M)$ of the second-order tangent bundle of $\Sigma$. \end{remark}

\subsection{The weighted normal bundle}\label{subsec:weight}
Recall \cite{gon:dif} (see also \cite{hig:eul}) that the smooth manifold $M$ is determined from 
its algebra of functions as $M=\on{Hom}_{\on{alg}}(C^\infty(M),\R)$, where the algebra morphism defined by 
$p\in M$ is given by evaluation $\on{ev}_p\colon C^\infty(M)\to \R$. The smooth structure of $M$ is determined
by requiring that all evaluation maps are smooth. For a vector bundle $E\to M$, one similarly has 
$E=\on{Hom}_{\on{alg}}(C^\infty_{\on{{pol}}}(E),\R)$. The scalar multiplication on $E$ (characterizing the vector bundle structure) comes from the usual $(\R,\cdot)$-action on the space $C^\infty_{\on{{pol}}}(E)$ of fiberwise polynomial functions, and the base projection comes from the inclusion of degree $0$ polynomials (identified with $C^\infty(M)$).

For a closed submanifold $N\subset M$, the algebra of polynomial functions on the normal bundle $\nu(M,N)$ is identified with the associated graded algebra to $C^\infty(M)$, using the filtration by order of vanishing on $N$. That is, 
\[ \nu(M,N)=\on{Hom}_{\on{alg}}(\on{gr}(C^\infty(M)),\R).\]
Given a subbundle $F\subset \nu(M,N)$, we have another filtration of $C^\infty(M)$ by the \emph{weighted} order of vanishing, as in Section \ref{subsec:filtration} above.   We shall define a weighted normal bundle similar to the usual normal bundle, but using this new filtration. As it turns out, the resulting space $\nu_\W(M,N)$ is a smooth fiber bundle over $N$. 
While $\nu_\W(M,N)$ is not a vector bundle, it does have an action of the monoid $(\R,\cdot)$ by scalar multiplication, making it into a \emph{graded bundle} in the sense of Grabowski-Rodkievicz. In particular, the associated graded algebra
to $C^\infty(M)$ is then identified with the polynomial functions on $\nu_\W(M,N)$. 
\begin{definition}
Let $(M,N)$ be a manifold pair and $F\subset \nu(M,N)$ a subbundle, defining a filtration of $C^\infty(M)$ as above.  	
	The \emph{weighted normal bundle} $\nu_\W(M,N)$ is the character spectrum 
\[ \nu_\W(M,N)=\on{Hom}_{\on{alg}}(\on{gr}(C^\infty(M)),\R),\]
with the projection $\nu_\W(M,N)\to N$ induced by the inclusion $C^\infty(N)\to \on{gr}(C^\infty(M))$ as the degree $0$ summand. 

\end{definition}
\begin{remark}
In this form, the definition is specific to the $C^\infty$-category. For more general settings (including the 
analytic or holomorphic categories), one should work with the filtration of the sheaf $C^\infty_M$ rather than of global functions. The definition above may then be used locally to define $\nu_\W(M,N)$ locally; one may also 
directly define the fibers in terms of the algebra bundle $\sA$, as  
\[\nu_\W(M,N)|_p=\on{Hom}_{\on{alg}}(\sA|_p,\R).\]
\end{remark}
\bigskip

To describe the smooth structure on $\nu_\W(M,N)$, choose a submanifold  $\Sigma$ containing $N$, with the property
\[ \nu(\Sigma,N)=F.\]
Proposition  \ref{prop:split} identifies the graded algebra bundle
$\mathsf{A}\to N$ with the  bundle of polynomial functions on the fibers of the graded bundle $\on{gr}(\nu(M,N))$, hence 
$\on{gr}(C^\infty(M))\cong C^\infty_{\on{pol}}(\on{gr}(\nu(M,N))$. 
Hence, the choice of any such $\Sigma$ determines an identification 
\begin{equation}\label{eq:rsigma}
R_\Sigma\colon  \nu_\W(M,N)\cong \on{gr}(\nu(M,N)),\end{equation}
intertwining the projection $\pi$ with the vector bundle projection for $\on{gr}(\nu(M,N))$. 
The resulting  $C^\infty$ structure on $\nu_\W(M,N)$ does not depend on the choice of $\Sigma$. To see this, 
note that every $a\in \on{gr}(C^\infty(M))$  defines a function on the weighted normal bundle, by evaluation: 
\begin{equation} \label{eq:evaluation}
\on{ev}_a\colon \nu_\W(M,N)\to \R,\ x\mapsto x(a).\end{equation}
The $C^\infty$ structure on $\nu_\W(M,N)$ is uniquely determined by requiring that all of these evaluation maps are smooth. 
\medskip


Returning to the general definition, we note the following properties:

\begin{enumerate}
	\item The weighted normal bundle comes with a natural `zero section' 
	\[  \iota\colon N\to \nu_\W(M,N),\]
	with $\pi\circ \iota=\on{id}_N$, 
	obtained by applying the functor $\on{Hom}_{\on{alg}}(\cdot,\R)$ to the augmentation map 
	$\on{gr}(C^\infty(M))\to \on{gr}(C^\infty(M))_0=C^\infty(N)$. 
	Under $R_\Sigma$, this becomes the  zero section of $\on{gr}(\nu(M,N))$. 
	\item\label{it:c} The action of the multiplicative monoid $(\R,\cdot)$ on $\on{gr}(\A)$, given on $\on{gr}(\A)_k$ as multiplication by $t^k$, is by algebra bundle morphisms. Hence it induces an action $t\mapsto \kappa_t$ on $\nu_\W(M,N)$ such that 
	\[ \on{ev}_{t\cdot a}=\on{ev}_a\circ \kappa_t,\ \ a\in \on{gr}(\A).\]
	Under $R_\Sigma$, this is the action on $\on{gr}(\nu(M,N))$ given as scalar multiplication by $t$ on $F$ and by $t^2$ on $\nu(M,N)/F$. In particular, the $(\R,\cdot)$-action on the weighted normal bundle is smooth. It makes  
	$\nu_\W(M,N)$ into a \emph{graded bundle} in the sense of Grabowski-Rodkievicz \cite{gra:gra}. (See below.) 
	\item\label{it:cd} Consider the morphisms of algebra bundles
	\[ \sA'\lra \sA\lra \sA'',\] 
	where $\sA'\subset \sA$ is the subalgebra bundle generated by $\sA_1$, while $\sA''=\sA$ is the quotient of $\sA$ by the ideal generated by $\sA_1$. Thus, $\sA'=\on{Sym}(F^*)$, the algebra of polynomial functions on $F$, while 
	$\sA''=\on{Sym}(\on{ann}(F))$, the algebra of polynomial functions on $\nu(M,N)/F$. 
	Applying the functor $\on{Hom}_{\on{alg}}(\cdot,\R)$, we obtain $(\R,\cdot)$-equivariant bundle maps 
	\[  \nu(M,N)/F\lra \nu_\W(M,N)\lra F.\]	
	Under the map $R_\Sigma$, these become the obvious inclusion and projection $\nu(M,N)/F\to \on{gr}(\nu(M,N))\to F$.  
	 \item \label{it:functorial}
   The construction is functorial: Consider the category whose objects are triples $(M,N,F)$ consisting of a manifold pair $(M,N)$ and a subbundle $F\subset \nu(M,N)$, and whose morphisms $(M',N',F')\to (M,N,F)$ are the morphisms of manifold pairs $\varphi\colon (M',N')\to (M,N)$ such that $\nu(\varphi)$  takes $F'$ to $F$. For any such morphism, the 
   pullback map on functions 
   $C^\infty(M)\to C^\infty(M')$ preserves the weight filtrations; it therefore  induces a morphism of graded algebras,  
   $\sA\to \sA'$, and via  $\on{Hom}_{\on{alg}}(\cdot,\R)$ a morphism of graded bundles 
   \[ \nu_\W(\varphi)\colon \nu_\W(M',N')\to \nu_\W(M,N).\] 
 \end{enumerate}

Following Grabowski-Rodkievicz \cite{gra:gra}, a manifold $P$ with a smooth action 
\[ \R\times P\to P,\ (t,x)\mapsto \kappa_t(x)\]  
of the monoid $(\R,\cdot)$ is called a \emph{graded bundle}. As shown in \cite{gra:gra}, 
these are automatically smooth fiber bundles over the submanifold $N=\kappa_0(P)$, with $\kappa_0$ as the bundle projection. 
First examples of graded bundles are the graded \emph{vector} bundles, with the $(\R,\cdot)$-action given as multiplication by  $t^k$ on the $k$-th summand, as well as the \emph{$r$-th tangent bundle} 
$T_rM=J_0^r(\R,M)$, with the $(\R,\cdot)$-action by linear reparametrizations of curves.
According to \eqref{it:c} above, the weighted normal bundle is a graded bundle. 
Every graded bundle $P$ comes with a distinguished \emph{Euler vector field} 
\[ \E\in\mf{X}(P)\]  with flow  given by $s\mapsto \kappa_{\exp(-s)}$. Morphisms of graded bundles are the $(\R,\cdot)$-equivariant smooth maps; equivalently these are the maps relating the Euler vector fields. 
A graded bundle $P$ is not naturally a vector bundle, in general. However, it determines a graded vector bundle $P_\lin= \nu(P,N)$, the \emph{linear approximation}, to which it is non-canonically isomorphic, as a graded bundle. 
In the case of $\nu_\W(M,N)$, this graded vector bundle only has components in degree $1$ and $2$, and is 
\begin{equation}\label{eq:nunu} \nu_\W(M,N)_\lin=\on{gr}(\nu(M,N)).\end{equation}
The map $R_\Sigma\colon \nu_\W(M,N)\to \on{gr}(\nu(M,N))$ is an isomorphism of graded bundles. Note finally that 
graded bundles come with a distinguished graded algebra $C^\infty_{\on{pol}}(E)$ of (fiberwise) polynomial functions, 
where a function is a homogeneous polynomial of degree $k$ if it satisfies $\L_\E f=kf$.
The description in \eqref{it:alter} may be seen as a special case of a general construction for graded bundles in terms of their polynomial functions, see \cite{gra:gra}. 

To conclude this section, let us mention two alternative constructions of the weighted normal bundle: 
\begin{enumerate}
	\item[1.] 	\label{it:alter}
	Consider the  exact sequence 
	\eqref{eq:exact}. The first map in this sequence is the map $m\colon \on{Sym}^2 \sA_1\to \sA_2$
	given by multiplication in the graded algebra bundle $\sA$. The weighted normal bundle is realized as a subbundle of the graded  vector bundle $\sA_1^*\oplus \sA_2^*\to N$, as follows:
	\[ \nu_\W(M,N)=\{(\alpha_1,\alpha_2)\in \sA_1^*\oplus \sA_2^*|\ \forall \mu\in \sA_1\colon \alpha_2(m(\mu\vee \mu))=\alpha_1(\mu)^2\}.\]
	(Here it is understood that $\mu$ has the same base point as $(\alpha_1,\alpha_2)$.)
	Note that this subset is invariant under the $(\R,\cdot)$-action, where $t\in \R$ acts as scalar multiplication by $t$ on $\sA_1^*$ and by $t^2$ on $\sA_2^*$. 
	Using a choice of $\Sigma$, one verifies that it is indeed a smooth submanifold.
	\item[2.] Similar to the description of the usual normal bundle as a subquotient of the tangent bundle, $\nu(M,N)=TM|_N/TN$, the weighted normal bundle admits a description as a subquotient of the second tangent bundle $T_2M=J_0^2(\R,M)$, 
		\[ \nu_W(M,N)=Q/\sim\]
	where 	$Q\subset T_2M$ is the pre-image of $\ti{F}\subset TM|_N$ under the projection $T_2M\to TM$. 
	See 
	\cite{meloi:prep} for a description of the equivalence relation. 
\end{enumerate}

\subsection{Weighted $k$-th order approximations}
The filtration on the sheaf of functions $\A=C^\infty_M$ extends to a filtration on the sheaf of differential forms $\Omega_M$, 
\[ \Omega_M=\Omega_{M,(0)}\supset \Omega_{M,(1)}\supset \Omega_{M,(2)}\supset \cdots, \]
where the part of filtration degree $k$ is locally spanned by forms $f_0\ d f_1\cdots d f_q$ such that $f_i\in \A_{(\mathsf{w}_i)}$, with 
$\mathsf{w}_0+\ldots+\mathsf{w}_q\ge k$. Note that $\alpha\in \Omega(M)$ has filtration degree $1$ if and only if $i_N^*\alpha=0$, and 
it has filtration degree $k\ge 2$ if and only if for all $X\in\mf{X}(M)$ with $X|_N\in \Gamma(\ti{F})$, both $\iota_X\alpha,\ \iota_X\d\alpha$ have filtration degree $k-1$. One also obtains a filtration on the sheaf of vector fields, 
\[ \mf{X}_M=\mf{X}_{M,(-2)}\supset \mf{X}_{M,(-1)}\supset \mf{X}_{M,(0)}\supset \cdots\]
where a (local) vector field has filtration degree $k$ if and only if the associated Lie derivative $\L_X$ raises the filtration degree of functions by $k$.  In particular, $X$ has 
filtration degree $-1$ if and only if $X|_N\in \Gamma(\ti{F})$; it has filtration degree $0$ if and only if $X$ is tangent to $N\subset M$ and the linear approximation $\nu(X)$ is tangent to $F\subset \nu(M,N)$. Thus, vector fields of filtration degree $0$ are the infinitesimal automorphisms of the triple $(M,N,F)$.
These filtrations are compatible with the usual operations from Cartan's calculus: exterior differential,
contraction,   Lie derivative, as well as  wedge products, $C^\infty_M$-module structures,  and Lie brackets.

 As before, we will refer to the filtration degrees as \emph{weights},  and think of a function $f$ 
 of filtration degree $k$ as a  function vanishing to order $k$ on $N$ \emph{in the weighted sense}. Its equivalence class in the associated graded algebra defines a function  
\[ f_{[k]}\in C^\infty(\nu_W(M,N))\] 
that is homogeneous of degree $k$ for the $(\R,\cdot)$-action. More generally, a form $\alpha\in \Omega(M)$ of filtration degree $k$ determines a form $\alpha_{[k]}\in \Omega(\nu_\W(M,N))$, homogeneous of degree $k$, and a vector field $X\in \mf{X}(M)$ of filtration degree $k$ defines a vector field 
$X_{[k]}\in\mf{X}(\nu_\W(M,N))$, homogeneous of degree $k$. Again, these k-th order homogeneous approximations are compatible with the standard operations on vector fields and forms, for example $(\L_{X}\alpha)_{[k+\ell]}=\L_{X_{[k]}}\alpha_{[\ell]}$ if $X$ has filtration degree $k$ and $\alpha$ has filtration degree $\ell$.


\subsection{Weighted Euler-like vector fields}
As before, let $(M,N)$ be a manifold pair together with a vector subbundle 
$F\subset \nu(M,N)$, defining a weighted normal bundle $\nu_\W(M,N)$. The normal bundle of the manifold pair 
$(M',N')=(\nu_\W(M,N),N)$ 
is given by 
\[ \nu(M',N')=\nu(\nu_\W(M,N),N)\cong \on{gr}(\nu(M,N))=F\oplus \nu(M,N)/F,\]
and so again contains  $F'=F$ as a vector subbundle.  We may thus iterate the construction, and consider the corresponding weighted normal bundle. 
\begin{lemma}
	There is a canonical isomorphism of graded bundles over $N$, 
	\[\nu_\W(\nu_\W(M,N),N)\cong \nu_\W(M,N).\]
\end{lemma}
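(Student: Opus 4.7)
My plan is to prove the lemma by identifying the defining graded algebras. Write $M' = \nu_\W(M,N)$, with zero section $N' = N\subset M'$ and subbundle $F' = F \subset \nu(M',N') = F\oplus \nu(M,N)/F$. Since the weighted normal bundle is canonically the character spectrum $\on{Hom}_{\on{alg}}(\sA,\R)$ of the associated graded algebra, it suffices to construct a canonical isomorphism of graded algebra bundles $\sA^M \cong \sA^{M'}$ and then apply $\on{Hom}_{\on{alg}}(-,\R)$.

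First, by the very definition of $M'=\nu_\W(M,N)$, its algebra of fiberwise polynomial functions equals $\sA^M$, yielding a canonical inclusion $\sA^M\hookrightarrow C^\infty(M')$. I would show this inclusion is filtration-preserving, where $\sA^M$ is filtered by its grading ($(\sA^M)_{(n)}=\bigoplus_{k\ge n} \sA^M_k$) and $C^\infty(M')$ carries its weighted filtration for $(N',F')$. It is enough to check this on the generators in degrees $1$ and $2$: elements of $\sA^M_1=F^*$ are fiberwise-linear on $F$ and zero on $\nu(M,N)/F$, hence vanish on $N'$ but have non-trivial differential along $F'=F$, so they have weight exactly $1$; elements of $\sA^M_2$ are either fiberwise-quadratic forms in $F$ or fiberwise-linear functions on $\nu(M,N)/F$, and in both cases they vanish on $N'$ with differentials vanishing along $F'$, so they have weight exactly $2$.

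Passing to associated gradeds then yields a canonical morphism $\sA^M\to \sA^{M'}$ of graded algebra bundles. To show it is an isomorphism, I would exploit the weighted Taylor expansion from the previous subsection: for every $f\in C^\infty(M')$ of weighted filtration degree $k$, the leading term $f_{[k]}$ is a fiberwise polynomial function on $M'$ homogeneous of degree $k$, hence represents an element of $\sA^M_k$, and $f-f_{[k]}$ has weighted filtration degree at least $k+1$. This gives surjectivity at each graded component. For injectivity, a nonzero homogeneous polynomial of degree $k$ on $M'$ cannot have strictly larger weighted filtration degree, which may be checked by local inspection in adapted coordinates $x_a,y_b,z_c$ as in Remark \ref{rem:feeling}, chosen via a submanifold $\Sigma$ with $\nu(\Sigma,N)=F$.

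Finally, applying $\on{Hom}_{\on{alg}}(-,\R)$ produces the desired isomorphism $\nu_\W(M',N')\cong \nu_\W(M,N)$; grading-preservation of the algebra map translates into $(\R,\cdot)$-equivariance, so this is a morphism of graded bundles over $N$. The main technical step is the graded-algebra isomorphism, which is ultimately a local computation most cleanly performed in coordinates adapted to a choice of $\Sigma$; however, both the statement and the resulting isomorphism of weighted normal bundles are manifestly canonical, independent of any such choice.
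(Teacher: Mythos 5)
Your proposal is correct and follows essentially the same route as the paper: the paper also identifies the graded algebra bundles by sending a function $f$ of filtration degree $k$ to the homogeneous polynomial $f_{[k]}$ on $\nu_\W(M,N)$, observes that homogeneity of degree $k$ gives filtration degree $k$ for the new weighting, and applies $\on{Hom}_{\on{alg}}(\cdot,\R)$. Your write-up merely spells out in more detail (generators in degrees $1,2$, surjectivity/injectivity via adapted coordinates from a choice of $\Sigma$) the isomorphism $\sA_k\to\sA'_k$ that the paper asserts.
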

\begin{proof} The data $(M',N',F')$ define the filtration $\A'_{(k)}$ of $C^\infty_{M'}$, with the corresponding graded algebra bundle 
	$\sA'$. 
	The maps $\A_{(k)}(U)\to C^\infty(\nu_\W(M,N)|_{U\cap N}),\ f\mapsto f_{[k]}$, for $U\subset M$ open,  take values in functions
	of homogeneity $k$, hence of filtration degree $k$. This defines isomorphisms $\sA_k\to \sA_k'$ of vector bundles over $N$. 
	The collection of these maps defines an algebra bundle isomorphism $\sA\to \sA'$, and hence an isomorphism of the weighted normal bundles. 
\end{proof}

Using this observation, we can define a (weighted) \emph{tubular neighborhood embedding} of a star-shaped open neighborhood $O$ of $N\subset \nu_\W(M,N)$ to be a morphism $\varphi\colon (O,N,F)\to (M,N,F)$ such that 
\begin{equation}\label{eq:weighttub} \nu_\W(M,N)\supset O\stackrel{\varphi}{\lra} M\end{equation}
is an embedding, and $\nu_\W(\varphi)$ is the identity map. On the other hand, a vector field $X\in\mf{X}(M)$ 
 is called (weighted) \emph{Euler-like} for $(M,N,F)$ if it has filtration degree $0$ (so, it preserves $N$ and $F$), and its approximation $X_{[0]}\in\mf{X}(\nu_\W(M,N))$ is the Euler vector field. Equivalently, $X$ is Euler-like if and only if for all 
 functions $f$ of filtration degree $k$, the difference $\L_X f-k f$ has filtration degree $k+1$. (It is enough to check this condition  for $k=1,2$.)

\begin{theorem}\label{th:eulerw}
	A weighted Euler-like vector field $X$ for  $(M,N,F)$ 
	determines a unique maximal tubular neighborhood embedding \eqref{eq:weighttub},
	in such a way that $\varphi^*X=\E$. 
	The construction is functorial: Given a morphism $\Phi\colon (M',N',F')\to (M,N,F)$, 
	and weighted Euler-like vector fields $X',X$ with $X'\sim_\Phi X$, the 
	corresponding tubular neighborhood embeddings intertwine $\Phi$ with its linear approximation $\nu_\W(\Phi)$.
\end{theorem}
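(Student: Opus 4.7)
The plan is to transpose the proof of Theorem \ref{th:euler} to the weighted setting, replacing the ordinary scalar action by the weighted action $\kappa_t$ on $\nu_\W(M,N)$ and running the Moser argument of Lemma \ref{lem:3} globally.

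First I would reduce to the case that $M$ is a star-shaped open neighborhood of $N$ inside $\nu_\W(M,N)$, equipped with its action $\kappa_t$. For this, pick a submanifold $\Sigma$ with $\nu(\Sigma,N)=F$ as in Proposition \ref{prop:split}, take an ordinary tubular neighborhood of $N$ in $\Sigma$ followed by one of $\Sigma$ in $M$, and compose with $R_\Sigma^{-1}$ from \eqref{eq:rsigma}. In the weighted local coordinates $(x_a,y_b,z_c)$ of Remark \ref{rem:feeling}, the resulting map $\psi\colon \nu_\W(M,N)\supset O_0\to M$ respects the ideal filtration $\A_{(k)}$, so $\nu_\W(\psi)=\on{id}$. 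Replacing $X$ by $\psi^*X$ on $O_0$, it suffices to construct $\varphi_0\colon O\to O_0$ with $\varphi_0^*X=\E$ and $\nu_\W(\varphi_0)=\on{id}$; then $\varphi:=\psi\circ\varphi_0$ will be the desired embedding.

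With this reduction in place, set $X_t=\kappa_t^*X$ for $t\neq 0$. The Euler-like hypothesis $X_{[0]}=\E$ is exactly what is needed for $X_t$ to extend smoothly to $t=0$ with value $\E$, as one checks in weighted coordinates just as in the computation of Lemma \ref{lem:3}. Since $\kappa_t\circ\kappa_s=\kappa_{ts}$ we have $dX_t/dt=[\E,X_t]/t$, and the time-dependent vector field $W_t=(X_t-\E)/t$ therefore extends smoothly down to $t=0$ as well. Letting $\Phi_t$ be its flow (with $\Phi_0=\on{id}$), the same calculation as in Lemma \ref{lem:2},
\[ \tfrac{d}{dt}\bigl(\Phi_{-t}^*X_t\bigr)=\Phi_{-t}^*\bigl([W_t,X_t]+\tfrac{1}{t}[\E,X_t]\bigr)=\Phi_{-t}^*\bigl[\tfrac{1}{t}X_t,X_t\bigr]=0, \]
shows that $\varphi_0:=\Phi_{-1}$ satisfies $\varphi_0^*X=\E$. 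Since $W_t$ has strictly positive weight at $t=0$, its flow fixes $N$ pointwise and has trivial weighted linear approximation, so $\nu_\W(\varphi_0)=\on{id}$ as required.

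Maximality and the dynamical characterization of the domain $U=\varphi(O)$ then follow as in Theorem \ref{th:euler}: conjugating the contractive flow $\kappa_{\exp(-s)}$ of $\E$ on $\nu_\W(M,N)$ through $\varphi^*X=\E$ shows that $U$ is precisely the set of $m\in M$ whose forward $X$-trajectory exists for all $s\ge 0$ and limits into $N$. Uniqueness follows from the argument at the end of Lemma \ref{lem:3}: a self-map of $\nu_\W(M,N)$ preserving $\E$ commutes with $\kappa_t$, and if its weighted linear approximation is the identity it must itself be the identity. Functoriality is automatic from the canonical dependence of $W_t$ on $X$: given $X'\sim_\Phi X$ we get $W_t'\sim_\Phi W_t$, hence $\Phi\circ\Phi_t'=\Phi_t\circ\Phi$, which descends to the intertwining of the tubular embeddings. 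The main obstacle is the first step, where the construction of $\psi$ and the verification of $\nu_\W(\psi)=\on{id}$ require explicit bookkeeping in weighted coordinates; once inside $\nu_\W(M,N)$ the Moser argument is an essentially mechanical transcription of Lemma \ref{lem:3}.
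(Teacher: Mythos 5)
Your existence--uniqueness core is sound and follows exactly the route the paper itself only sketches as an alternative to its (deferred) deformation-space proof: reduce to a $\kappa_{[0,1]}$-invariant neighborhood of $N$ in $\nu_\W(M,N)$ via an initial embedding built from a choice of $\Sigma$, run the Moser homotopy $X_t=\kappa_t^*X$, $W_t=\tfrac1t(X_t-\E)$ as in Lemma \ref{lem:3}, and get uniqueness from the rigidity of maps relating weighted Euler vector fields (such a map is $\kappa_t$-equivariant, hence equal to its weighted approximation). One caution on the reduction: the inference ``$\psi$ respects the ideal filtration, so $\nu_\W(\psi)=\on{id}$'' is a non sequitur --- \emph{every} morphism of triples respects the filtration, whereas $\nu_\W(\psi)=\on{id}$ demands that the induced map on $\on{gr}(\A)$ be the canonical identification. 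For your composite construction this does hold, but the nontrivial point is precisely the degree-$2$ ``shear'' component $\on{ann}(F)\to\on{Sym}^2 F^*$, which vanishes because your $\psi$ carries the canonical copy of $F$ into $\Sigma$ and the two ordinary tubular neighborhoods have identity linear approximations; that is where the bookkeeping you defer actually lives. (Also, you need $W_t$ to have filtration degree $\geq 1$ for \emph{all} $t\in[0,1]$, not just at $t=0$, to conclude $\nu_\W(\Phi_{-1})=\on{id}$; this is true since $\kappa_t^*$ preserves the filtration for $t\neq 0$.)

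The genuine gap is in functoriality. The family $W_t$ is \emph{not} canonically attached to $X$: it is defined only after transporting $X$ to a model carrying the action $\kappa_t$, i.e.\ after the initial reduction, and the reductions for $(M',N',F')$ and $(M,N,F)$ are chosen independently. The map $\Phi$ read through these identifications has no reason to intertwine $\kappa'_t$ with $\kappa_t$, so $X'\sim_\Phi X$ does \emph{not} yield $W'_t\sim_\Phi W_t$, and the asserted commutation of the Moser flows with $\Phi$ fails in general (it would require choosing the initial embeddings compatibly with $\Phi$, which is impossible for a general morphism). The repair is to deduce functoriality from your rigidity/uniqueness step instead of from the homotopy: the map $\Theta=\varphi^{-1}\circ\Phi\circ\varphi'$ is defined on a neighborhood of $N'$ in $\nu_\W(M',N')$ (since $\Phi(N')\subset N\subset\varphi(O)$), relates $\E'$ to $\E$, and has $\nu_\W(\Theta)=\nu_\W(\Phi)$; equivariance under the contracting actions forces $\Theta$ to coincide with $\nu_\W(\Phi)$ near $N'$, and the identity $\Phi\circ\varphi'=\varphi\circ\nu_\W(\Phi)$ then propagates to the maximal domains using the flow equivariance of $\varphi$ and $\varphi'$. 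This is also how the unweighted Theorem \ref{th:euler} is handled, and it is the statement that comes for free in the deformation-space argument the paper refers to.
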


The proof of this result, using (weighted) deformation spaces, is a straightforward extension of the argument in 
\cite{hig:eul} and \cite{bis:def}. Details, for more general weightings, will be given in \cite{meloi:prep}. Alternatively, one may give a proof in coordinates $x_i,y_j,z_k$ adapted to $N\subset M$ and to $F\subset \nu(M,N)$, similar to the proof of  Lemma \ref{lem:3}.

\subsection{Application: the isotropic embedding theorem re-visited}
Let $(M,\omega)$ be a symplectic manifold. Recall that a closed submanifold $N\subset M$ is \emph{isotropic} if 
the pullback of $\omega$ to $N$ vanishes. Equivalently,  letting $TN^\omega\subset TM|_N$ denote the $\omega$-orthogonal space to the tangent bundle of $N$, we have that $TN\subset TN^\omega$. 
The \emph{symplectic normal bundle} of the isotropic submanifold $N$ is the subbundle  of the normal bundle given as  
\begin{equation}\label{eq:ourf} F=TN^\omega/TN\subset \nu(M,N).\end{equation}
By our general theory, it determines a weighted normal bundle  $\nu_\W(M,N)\to N$. The corresponding sequence of graded bundles from \ref{subsec:weight}\eqref{it:cd},  reads as 
\begin{equation}\label{eq:ourex} T^*N\to \nu_\W(M,N)\to TN^\omega/TN,\end{equation}
where we identified $\nu(M,N)/F=TM/TN^\omega\cong T^*N$. 
We will show that $\omega$ canonically induces a symplectic form on the weighted normal bundle, extending the 
 standard symplectic form $\omega_{\on{can}}$ on $T^*N$. 
 
\begin{proposition} For any isotropic submanifold $N$ of a symplectic manifold $(M,\omega)$, 
the symplectic form $\omega$ has filtration degree $2$ with respect to the weighting given by $F=TN^\omega/TN$. 
The second order approximation 
\[ \omega_{[2]}\in \Omega^2(\nu_\W(M,N))\] is symplectic form. Relative to this symplectic form, 
$T^*N$ is a symplectic submanifold of $\nu_W(M,N)$; in particular,  the zero section $N$ is isotropic. 
\end{proposition}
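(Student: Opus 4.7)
The plan is to verify the filtration-degree claim, then the symplecticity of $\omega_{[2]}$, and finally the statements about $T^*N$ and $N$, using throughout the compatibility of the $(\cdot)_{[k]}$-construction with Cartan calculus. First I would establish that $\omega$ has filtration degree $2$: isotropy gives $i_N^*\omega=0$, which is exactly filtration degree at least $1$, and the characterization from the preceding subsection upgrades this to degree $2$ once one checks that $\iota_X\omega$ and $\iota_X\d\omega$ have filtration degree $1$ for every $X\in\mf{X}(M)$ with $X|_N\in\Gamma(\ti{F})=\Gamma(TN^\omega)$. The second vanishes because $\omega$ is closed, and for the first, $X_p\in T_pN^\omega$ forces $(\iota_X\omega)_p$ to annihilate $T_pN$, so $i_N^*\iota_X\omega=0$.

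Next I would verify that the degree-$2$ form $\omega_{[2]}$ is symplectic. Closedness is formal, $\d\omega_{[2]}=(\d\omega)_{[2]}=0$, and $\kappa_t^*\omega_{[2]}=t^2\omega_{[2]}$. For non-degeneracy, the idea is to check first on the zero section and then propagate by homogeneity. At $p\in N$ one has $T_p\nu_\W(M,N)=T_pN\oplus F_p\oplus (\nu(M,N)/F)_p$, and degree-$2$ homogeneity forces $\omega_{[2]}|_p$ to split into a pairing $T_pN\otimes(\nu(M,N)/F)_p\to\R$ plus a skew form on $F_p$, with all other weight components vanishing. The key identification---which I view as the main obstacle---is that these pieces are precisely (i) the canonical pairing of $T_pN$ with $T_p^*N$ under the $\omega$-induced isomorphism $\nu(M,N)/F=TM|_N/TN^\omega\cong T^*N$, $v\mapsto\omega(v,\cdot)|_{TN}$, and (ii) the well-defined descent of $\omega_p$ from $T_pN^\omega$ to $F_p=T_pN^\omega/T_pN$; both are non-degenerate, the latter because $(T_pN^\omega)^\omega=T_pN$. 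I would pin down this identification either in local coordinates $(x_a,y_b,z_c)$ as in Remark~\ref{rem:feeling}, adapted so that $\ti{F}$ is defined by $\d z_c=0$---writing the weight-$2$ part of $\omega$ along $N$ as $\sum_c \d z_c\wedge \alpha_c+\tfrac{1}{2}\sum_{b,b'}\beta_{bb'}\,\d y_b\wedge \d y_{b'}$ and reading off the two pairings from $\alpha_c$ and $\beta_{bb'}$---or by choosing $\Sigma$ with $\nu(\Sigma,N)=F$ as in Proposition~\ref{prop:split} and computing through $R_\Sigma$. Once non-degeneracy holds on $N$ it holds on an open neighborhood $U\supset N$; for arbitrary $x$ one has $\kappa_t(x)\to\kappa_0(x)\in N$ as $t\to 0$, so $\kappa_t(x)\in U$ for small $t\ne 0$, and $\kappa_t^*\omega_{[2]}=t^2\omega_{[2]}$ with $\kappa_t$ a diffeomorphism for $t\ne 0$ propagates non-degeneracy back to $x$.

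Finally, the inclusion $T^*N\cong\nu(M,N)/F\hookrightarrow\nu_\W(M,N)$ from Section~\ref{subsec:weight} is a graded-bundle map, so $\omega_{[2]}|_{T^*N}$ is closed and homogeneous of degree $2$; at $p\in N$ its restriction to $T_pN\oplus T_p^*N$ is the canonical pairing identified in (i), and the same openness-plus-homogeneity argument gives non-degeneracy on all of $T^*N$, so $T^*N$ is a symplectic submanifold. Isotropy of $N\subset\nu_\W(M,N)$ is then automatic from the weight count: $T_pN\times T_pN$ has total weight $0<2$, so $\omega_{[2]}|_p$ vanishes there.
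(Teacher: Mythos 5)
Your proposal is correct and takes essentially the same route as the paper: filtration degree $2$ follows because $\ti{F}=TN^\omega$ makes $i_N^*\iota_X\omega=0$, then the weight decomposition $TN\oplus TN^\omega/TN\oplus T^*N$ along the zero section with degree counting kills all blocks except the $(0,2)$-pairing and the $(1,1)$-form, these are identified with the canonical pairing and the reduced symplectic form on $TN^\omega/TN$, and non-degeneracy near $N$ propagates everywhere by homogeneity (likewise for the restriction to $T^*N$). The only cosmetic difference is that the paper verifies the block identification by evaluating $\omega_{[2]}$ on the homogeneous approximations $X_{[0]},Y_{[-1]},Z_{[-2]}$ of vector fields of filtration degrees $0,-1,-2$, using the compatibility of approximations with contractions, rather than by a computation in adapted coordinates or through $R_\Sigma$.
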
 
\begin{proof} 
To show that $\omega$ has filtration degree $2$, we must show that for any vector field $X\in\mf{X}(M)$ with $X|_N=\Gamma(\ti{F})$, the 1-form $\iota_X\omega$ pulls back to zero on $N$. But this is clear since $\ti{F}=TN^\omega$, by definition. Hence the second-order approximation $\omega_{[2]}$ is defined, with $\d\omega_{[2]}=(\d\omega)_{[2]}=0$. 
Consider the decomposition 
\begin{equation}\label{eq:decomp} T\nu_\W(M,N)|_N=TN\oplus TN^\omega/TN\oplus T^*N.\end{equation}
In terms of the linearized action of the (weighted) Euler vector field, these are the eigenbundles for the 
eigenvalues $0,-1,-2$. Accordingly, sections of the three summands are realized as the restrictions of 
the $k$-th order approximations of vector fields of filtration degrees $k=0,-1,-2$. 

We claim that $\omega_{[2]}|_N$ is symplectic, and in fact coincides with the given symplectic structures on the vector bundles
$TN^\omega/TN$ and on $TN\oplus T^*N$ (the latter given by the pairing). This claim will prove the proposition, 
since it implies that $\omega_{[2]}$ nondegenerate on an open neighborhood of $N$, and hence everywhere, by homogeneity. 

For vector fields $X,X'$ of filtration degree $0$, vector fields $Y,Y'$ of filtration degree $-1$, and $Z,Z'$ of filtration degree $-2$ we have, for degree reasons,  
\[  \omega_{[2]}(Y_{[-1]},Z_{[-2]})|_N=0,\  \ \  \ \omega_{[2]}(X_{[0]},Y_{[-1]})|_N=0,\] 
\[\omega_{[2]}(X_{[0]},X'_{[0]})|_N=0,\ \ \ \ 
 \omega_{[2]}(Z_{[-2]},Z'_{[-2]})|_N=0.\]
 (Here we used that homogeneous functions of homogeneity degree $k<0$ are zero, and those of homogeneity $k>0$ restrict to $0$ on $N$.  
This shows that the first and third summand in 
 \eqref{eq:decomp}  are isotropic, and both are  is $\omega_{[2]}|_N$-orthogonal to middle  summand. 
On the other hand, 
\[ \omega_{[2]}(Y_{[-1]},Y'_{[-1]})|_N=\omega(Y,Y')|_N=\omega|_N(Y|_N,Y'|_N).\]
shows that $\omega_{[2]}$ restricts to the given symplectic structure on middle summand $TN^\omega/TN$. 
Finally, 
\[   \omega_{[2]}(X_{[0]},Z_{[-2]})|_N=\omega(X,Z)|_N=\omega|_N(X|_N,Z|_N).\]
But $X$ having filtration degree $0$ means in particular that it is tangent to $N$, and so 
the right hand side is just the pairing between a vector tangent to $N$ with a section of 
$TM|_N/TN^\omega$.
\end{proof} 
  
We think of $\nu_\W(M,N)$ with the symplectic form $\omega_{[2]}$ as the local model for the symplectic structure near the isotropic submanifold $N$.

\begin{theorem}[Isotropic embeddings] 
For any isotropic submanifold $N\subset M$ of a symplectic manifold $(M,\omega)$, with the corresponding 
weighted normal bundle $\nu_\W(M,N)$, there exists a tubular neighborhood embedding $\nu_\W(M,N)\supset O\stackrel{\varphi}{\lra} M$ satisfying 
\[ \varphi^*\omega=\omega_{[2]}.\] 
\end{theorem}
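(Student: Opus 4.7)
My plan is to mimic the proof of Weinstein's Lagrangian neighborhood theorem (Section \ref{subsec:weinstein}) in the weighted setting, with Theorem \ref{th:euler} replaced by its weighted analogue, Theorem \ref{th:eulerw}.

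First I would reduce to a model situation. Pasting together the local model $\sum_j y_j \tf{\p}{\p y_j} + 2\sum_k z_k \tf{\p}{\p z_k}$ in adapted coordinates (Remark \ref{rem:feeling}) via a partition of unity along $N$ produces a weighted Euler-like vector field on a neighborhood of $N$ in $M$, and the tubular neighborhood embedding furnished by Theorem \ref{th:eulerw} lets me assume that $M$ is a $\kappa_t$-star-shaped open neighborhood of $N$ in $\nu_\W(M,N)$. The symplectic form $\omega$ then still has filtration degree $2$, with degree-$2$ approximation $\omega_{[2]}$, by the preceding proposition.

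Next I would construct a primitive $\alpha \in \Omega^1(M)$ of $\omega$ of filtration degree $2$ via the weighted homotopy operator
\[ \alpha = \int_0^1 \tf{1}{t}\, \kappa_t^* \iota_\E \omega \, dt. \]
Because $\iota_\E \omega$ has filtration degree $2$, the integrand is of order $t$ as $t \to 0$, so $\alpha$ is a well-defined smooth form of filtration degree $2$. The identity $\tf{d}{dt}\kappa_t^*\omega = \tf{1}{t}\kappa_t^* \L_\E \omega$ combined with $\kappa_0^*\omega = 0$ (which holds because $\kappa_0$ factors through the isotropic submanifold $N$) yields $d\alpha = \omega$; replacing $\omega$ by $\omega_{[2]}$ in the integrand and using $\kappa_t^*\omega_{[2]} = t^2 \omega_{[2]}$ gives the degree-$2$ approximation $\alpha_{[2]} = \tf{1}{2}\iota_\E \omega_{[2]}$.

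Define $X \in \mf{X}(M)$ by $\iota_X \omega = 2\alpha$. By nondegeneracy of $\omega_{[2]}$ this equation has a unique solution of filtration degree $0$, with degree-$0$ approximation determined by $\iota_{X_{[0]}}\omega_{[2]} = 2\alpha_{[2]} = \iota_\E \omega_{[2]}$, forcing $X_{[0]} = \E$; thus $X$ is weighted Euler-like. Since $\L_X \omega = d\iota_X \omega = 2\omega$, Theorem \ref{th:eulerw} delivers a tubular neighborhood embedding $\varphi$ with $\varphi^*X = \E$, whence $\L_\E(\varphi^*\omega) = 2\varphi^*\omega$; hence $\varphi^*\omega$ is weighted homogeneous of degree $2$ near $N$ and coincides there with $\omega_{[2]}$, using $\nu_\W(\varphi) = \on{id}$. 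The main obstacle I anticipate is the bookkeeping of weighted filtration degrees — verifying that the weighted homotopy operator yields a primitive of filtration degree $2$ with the claimed leading term, and that the equation $\iota_X \omega = 2\alpha$ forces $X$ to be of filtration degree $0$ with $X_{[0]} = \E$; the analytic content of producing the diffeomorphism is absorbed into Theorem \ref{th:eulerw}.
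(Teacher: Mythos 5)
Your proposal is correct and follows essentially the same route as the paper's proof: reduce via an initial tubular neighborhood embedding to a star-shaped neighborhood of $N$ in $\nu_\W(M,N)$, take the primitive $\alpha=\int_0^1 \tfrac{1}{t}\kappa_t^*\iota_\E\omega\, dt$ of filtration degree $2$, define $X$ by $\iota_X\omega=2\alpha$, verify $X_{[0]}=\E$, and apply Theorem \ref{th:eulerw} together with $\L_X\omega=2\omega$ to conclude $\varphi^*\omega=\omega_{[2]}$. Your added remark on producing the initial weighted Euler-like vector field by patching the local models with a partition of unity is a harmless elaboration of the reduction step the paper takes for granted.
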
 
 \begin{proof}

We follows the proof of Weinstein's  Lagrangian neighborhood theorem in Section \ref{subsec:weinstein}. 
By replacing $M$ with the image of an initial tubular neighborhood embedding $\nu_\W(M,N)\supset O_1\stackrel{\varphi_1}{\lra} M$, we may assume that $M$ is a star-shaped open neighborhood of $N$ inside $\nu_\W(M,N)$. Let $\alpha$ be the primitive of $\omega$, defined by the homotopy operator: 
\[  \alpha=\int_0^1 \f{1}{t}\ \kappa_t^*\iota_\E \omega\ \d t.\]
Since $\iota_\E$ and $\kappa_t^*$ preserve  filtration degrees, the 1-form $\alpha$ has filtration degree $2$, and 
the vector field  
 $X\in\mf{X}(M)$ be defined by 
\[ \iota_X\omega=2\alpha\]
$X$ has filtration degree $0$. The calculation 
\[
\iota_{X_{[0]}}\omega_{[2]}=2\alpha_{[2]}
=2 \int_0^1 \f{1}{t}\ \kappa_t^*\iota_\E \omega_{[2]}\ \d t
=2  \int_0^1 t\,\iota_\E \omega_{[2]}\ \d t=\iota_\E \omega_{[2]}
\]
shows that  $X_{[0]}=\E$, so that $X$ is (weighted) Euler-like for the triple $(M,N,TN^\omega/TN)$.
 Let 
$\nu_\W(M,N)\supset O\stackrel{\varphi}{\lra} M$ be the new tubular neighborhood embedding defined by $X$. 
 Then 
 \[ \L_\E\varphi^*\omega=\varphi^*\L_X\omega=\varphi^*\d\iota_X\omega=2\varphi^*\d\alpha=2\varphi^*\omega.\]
 Thus, $\varphi^*\omega$ is homogeneous of degree $2$, and hence coincides with its 2nd-order approximation 
 $\omega_{[2]}$. 	
 \end{proof}
 
 
\begin{remark}
In the standard treatment of Weinstein's isotropic embedding theorem \cite{wei:sym,we:nei}, one uses a connection to extend the given fiberwise symplectic structure on 
\[ T(\on{gr}(\nu(M,N))|_N=TN\oplus T^*N\oplus  TN^\omega/TN\] 
to a symplectic structure on the total space of $\on{gr}(\nu(M,N))$, and then establishes a symplectomorphism with a neighborhood of $N$ inside $M$.  The use of the weighted normal bundle removes the non-canonical choice of a connection.
\end{remark} 

\subsection{A symplectic interpretation of the bundle $\sA_2$}
Towards the end of Section \ref{subsec:weight}, we remarked that the weighted normal bundle $\nu_\W(M,N)$ 
for a given $F\subset \nu(M,N)$ is fully determined, as a graded bundle, by the degree $2$ component of the algebra 
bundle $\sA$, together with the inclusion map 
\[ m\colon \on{Sym}^2 F^*\to \sA_2.\] 
The vector bundle $\sA_2$ was defined in terms of its sheaf of sections, $\J/\I\J$. One may wonder about more geometric descriptions of $\sA_2$. As it turns out, there is an interesting symplectic interpretation. 
We begin with a symplectic interpretation of the second cotangent bundle. (See, e.g., \cite{we:le,bry:ovr}.) 

Given a symplectic vector space $(S,\omega)$, let $\on{Gr}_{\on{Lag}}(S)$ be its manifold of Lagrangian subspaces. 
For a fixed Lagrangian subspace $L\subset S$, the open subset $\on{Gr}_{\on{Lag}}(S,L)$ of Lagrangian subspaces that are transverse to $L$ is canonically an affine space, with $\on{Sym}^2 L$ as its space of motions.  This generalizes to symplectic vector bundles and Lagrangian subbundles, and in particular applies to the tangent bundles of symplectic manifolds with Lagrangian foliations. Consider in particular a cotangent bundle $T^*M$, with its standard symplectic form $\omega_{\on{can}}$, and let $V\subset T(T^*M)$ be the tangent bundle to the fibers of $T^*M\to M$. There is a canonical isomorphism of fiber bundles over $T^*M$, 
\begin{equation}\label{eq:secondcotangent} T^*_2M\stackrel{\cong}{\lra} \on{Gr}_{\on{Lag}}(T(T^*M),V),\end{equation}
where $T^*_2M$ is the second cotangent bundle. In more detail,  recall that $T^*_2M\to M$ is the vector bundle whose fiber fiber
at $p\in M$ is the space of 2-jets of functions $f\in C^\infty(M)$ with $f(p)=0$. The map taking 2-jets to 1-jets  gives a surjective vector bundle map $T^*_2M\to T^*M$, with kernel $\on{Sym}^2(T^*M)$. We will use this map to regard $T^*_2M$ as a fiber bundle over $T^*M$. On the other hand, every $f\in C^\infty(M)$ defines a Lagrangian submanifold of $T^*M$, given as the range of its exterior differential $\d f\colon M\to T^*M$. For $p\in M$, with $\xi=\d f|_p$,
the Lagrangian subspace 
\[ T_\xi(\on{ran}(\d f))\subset T_\xi(T^*M)\]
depends only on the 2-jet of 
$f$ at $p$. Adding a constant, we may arrange $f(p)=0$. In conclusion, the map \eqref{eq:secondcotangent} takes the 2-jet of a function $f$ with $f(p)=0$ to the Lagrangian subspace given as the tangent space to $\on{ran}(\d f)$ at
$\xi=\d f|_p$. The subbundle $\on{Sym}^2(T^*M)$ is realized as the restriction of $\on{Gr}_{\on{Lag}}(T(T^*M),V)$ to 
$M\subset T^*M$, using the splitting $T(T^*M)|_M=V|_M\oplus TM$ and the symplectic form to identify $V|_M\cong T^*M$.

Now suppose that $N\subset M$ is a closed submanifold, and that $F\subset \nu(M,N)$ is a subbundle of its normal bundle.
The conormal bundle $\nu(M,N)^*$ is a Lagrangian submanifold of $T^*M$, and hence $\on{ann}(F)\subset \nu(M,N)^*$ is an isotropic submanifold of $T^*M$. Let $C=T\on{ann}(F)^\omega$, a coisotropic subbundle along $\on{ann}(F)$, and take $S$ to be the symplectic vector bundle  
\begin{equation}\label{eq:symplnormal} C/C^\omega\to \on{ann}(F).\end{equation}
It comes with a distinguished Lagrangian subbundle $L\subset C/C^\omega$
obtained by symplectic reduction of the vertical bundle $V\subset T(T^*M)$:
\[ L= (V|_{\on{ann}(F)}\cap C)/(V|_{\on{ann}(F)}\cap C^\omega).\] 
We obtain an identification 
\[ \sA_2\stackrel{\cong}{\lra} \on{Gr}_{\on{Lag}}(C/C^\omega,L).\]
The map is constructed similar to \eqref{eq:secondcotangent}. Let $f\in C^\infty(M)_{(2)}$, so that $f|_N=0$ and 
$\d f|_{\ti{F}}=0$. For $p\in N$, the differential $\xi=\d f|_p$ is then an element of $\on{ann}(F)$. The 
tangent space to $\on{ran}(\d f)\subset T^*M$ at $\xi$ is a Lagrangian subspace of $T_\xi(T^*M)$ transverse to $V|_\xi$; 
by reduction it defines a Lagrangian subspace of $(C/C^\omega)|_\xi$ transverse to $L|_\xi$. By checking in adapted coordinates (as in Remark \ref{rem:feeling}), one verifies that this gives an isomorphism between $\sA_2|_\xi$ with 
$\on{Gr}_{\on{Lag}}((C/C^\omega)|_\xi,L|_\xi)$. The subspace $\on{Sym}^2 F^*\to \sA_2$ is realized as the restriction of the bundle 
$\on{Gr}_{\on{Lag}}(C/C^\omega,L)$ to the submanifold $N\subset\on{ann}(F)$.

 \bibliographystyle{amsplain} 
\def\cprime{$'$} \def\polhk#1{\setbox0=\hbox{#1}{\ooalign{\hidewidth
			\lower1.5ex\hbox{`}\hidewidth\crcr\unhbox0}}} \def\cprime{$'$}
\def\cprime{$'$} \def\cprime{$'$} \def\cprime{$'$} \def\cprime{$'$}
\def\polhk#1{\setbox0=\hbox{#1}{\ooalign{\hidewidth
			\lower1.5ex\hbox{`}\hidewidth\crcr\unhbox0}}} \def\cprime{$'$}
\def\cprime{$'$} \def\cprime{$'$} \def\cprime{$'$} \def\cprime{$'$}
\providecommand{\bysame}{\leavevmode\hbox to3em{\hrulefill}\thinspace}
\providecommand{\MR}{\relax\ifhmode\unskip\space\fi MR }
\providecommand{\MRhref}[2]{%
	\href{http://www.ams.org/mathscinet-getitem?mr=#1}{#2}
}
\providecommand{\href}[2]{#2}

\end{document}